\documentclass[12pt]{amsart}
\usepackage{graphicx,verbatim,amssymb,amscd}
\usepackage[active]{srcltx}


\vfuzz2pt 
\hfuzz2pt 
\newtheorem{thm}{Theorem}[section]
\newtheorem{cor}[thm]{Corollary}
\newtheorem{lem}[thm]{Lemma}
\newtheorem{prop}[thm]{Proposition}
\theoremstyle{definition}

\theoremstyle{remark}

\newtheorem{ex}[thm]{Example}
\numberwithin{equation}{section}


\newcommand{\R}{\mathbb R}

\def\CP{\mathbb{C}P}
\def\C{\mathbb{C}}
\def\Z{\mathbb{Z}}
\newcommand{\eps}{\varepsilon}

\def\d{\partial}

\def\0{\varnothing}
\def\disk{\mathbb{D}}
\def\ol{\overline}
\def\uc{\mathbb{S}^1}
\def\si{\sigma}
  \def\Bc{\mathcal{B}}  \def\Cc{\mathcal{C}}
  \def\Lc{\mathcal{L}}  \def\Hc{\mathcal{H}}
\def\Oc{\mathcal{O}}  \def\Rc{\mathcal{R}}  \def\Uc{\mathcal{U}}
\def\Zc{\mathcal{Z}}
\def\Aut{{\rm Aut}}

\setlength{\topmargin}{0cm}
\setlength{\textheight}{21 cm} \setlength{\textwidth}{15 cm}
\setlength{\oddsidemargin}{-0.1cm}
\setlength{\evensidemargin}{-0.1cm}
\begin{document}

\title[Captures, matings and regluings]{Captures, matings and regluings}
\author{Inna Mashanova}
\author{Vladlen Timorin}
\address[I. Mashanova and V. Timorin]
{Faculty of Mathematics and Laboratory of Algebraic Geometry\\
National Research University Higher School of Economics\\
7 Vavilova St 117312 Moscow, Russia}

\address[V.~Timorin]
{Independent University of Moscow\\
Bolshoy Vlasyevskiy Pereulok 11, 119002 Moscow, Russia}
\email{vtimorin@hse.ru}

\thanks{Both authors were partially supported by AG Laboratory
NRU-HSE, MESRF grant ag. 11 11.G34.31.0023}

\thanks{
The second named author was partially supported by
the Deligne fellowship, the Simons-IUM fellowship, RFBR grants 10-01-00739-a,
11-01-00654-a, and MESRF grant MK-2790.2011.1.
}


\begin{abstract}
In parameter slices of quadratic rational functions, we identify arcs
represented by matings of quadratic polynomials.
These arcs are on the boundaries of hyperbolic components.
\end{abstract}

\maketitle
\section{Introduction}

The operation of {\em mating} has been introduced by Douady and Hubbard.
Mating can be applied to a pair of polynomials of the same degree, and
gives a continuous self-map (the {\em mating map}) of some topological space
(the {\em mating space}).
In many cases, the mating space is homeomorphic to the 2-sphere, and the
mating map is a branched covering topologically conjugate to a rational
function.
In this paper, we only consider matings of degree 2 polynomials.

Recall that any quadratic polynomial in one complex variable is affinely conjugate
to $p_c(z)=z^2+c$ for some $c$.
The {\em filled Julia set $K_c$} of $p_c$ is defined as the set of all points $z\in\C$
such that the $p_c$-orbit of $z$ is bounded, and the {\em Julia set} $J_c$
as the boundary of $K_c$.
The {\em Fatou set} of $p_c$ is defined as the complement to $J_c$
in the Riemann sphere; its connected components are called {\em Fatou components}.
The {\em Mandelbrot set $M$} is the locus of all $c$ such that $K_c$ is connected,
equivalently, $c\in K_c$.
Consider two quadratic polynomials $p_c$ and $p_{c'}$ such that $K_c$ and $K_{c'}$
are locally connected, equivalently, $J_c$ and $J_{c'}$ are locally connected.
In this case, it is well known that there are {\em Caratheodory loops}
$\gamma_c:\R/\Z\to J_c$
and $\gamma_{c'}:\R/\Z\to J_{c'}$ that semi-conjugate the map $\theta\mapsto 2\theta$
on $\R/\Z$ with the maps $p_c$ and $p_{c'}$ on $J_c$ and $J_{c'}$, respectively.
Define the mating space $X=X_{c,c'}$ as the quotient of $K_c\sqcup K_{c'}$ by the
minimal equivalence relation $\sim=\sim_{c,c'}$ such that $z\in K_c$ is
equivalent to $z'\in K_{c'}$
if $z=\gamma_c(\theta)$ and $z'=\gamma_{c'}(-\theta)$ for some $\theta\in\R/\Z$.
Since the self-map of $K_c\sqcup K_{c'}$ acting as $p_c$ on $K_c$ and as $p_{c'}$
on $K_{c'}$ takes $\sim$-classes to $\sim$-classes, it descends to a self-map
$p_c\sqcup p_{c'}$ of the mating space $X$.
This map is called the {\em topological mating} of $p_c$ and $p_{c'}$.
If a rational function $f:\CP^1\to\CP^1$ is topologically conjugate to
the topological mating $p_c\sqcup p_{c'}$, then this rational function is called
a {\em conformal mating} of $p_c$ and $p_{c'}$.

In this paper, we identify certain arcs in parameter slices of
quadratic rational functions that consist of conformal matings.
More precisely, we consider the slices $Per_k(0)$ consisting
of conformal conjugacy classes of degree 2 rational functions $f$ with
marked critical points $c_1$, $c_2$ such that $f^{\circ k}(c_1)=c_1$.
These slices were first defined and studied by M. Rees \cite{Rees_description}
and J. Milnor \cite{Milnor-QuadRat}.
In his thesis, B. Wittner \cite{Wittner} described an operation that
provides topological models for many hyperbolic components of $Per_k(0)$.
These are called {\em capture components}.
In this paper, we prove that the boundaries of capture components
of $Per_k(0)$ contain arcs of matings.
The main theorems are Theorem \ref{t:same-reg}, \ref{t:end-cap} and \ref{t:cut-cap}.

\paragraph{\em Organization of the paper}
A significant part of this paper is expository.
In Section 2, we recall the terminology of quadratic invariant laminations
\cite{Thurston}.
We also use this terminology to give several equivalent definitions of
matings and to describe topological models for captures \cite{Rees_description}.
In Section 3, we recall the topological surgery called {\em regluing} \cite{T09}.
Regluing will be used to redescribe topological models for captures.
Finally, in Section 4, we consider parameter slices $Per_k(0)$ of quadratic
rational functions.
Topological models for rational functions representing boundary
points of some hyperbolic components in $Per_k(0)$ were described in \cite{T09}
in terms of regluing.
Comparing this description with the description of captures as regluings
of matings, we obtain arcs of matings on the boundaries of capture components.

\section{Laminations, matings and captures}
In this section, we discuss topological models for quadratic polynomials
and matings of quadratic polynomials.
We also define captures.

\subsection{Invariant laminations}
\label{ss:inv-lam}
Topological models for quadratic polynomials can be described in
terms of Thurston's invariant quadratic laminations in the disk
$\disk=\{z\in\C\,|\, |z|<1\}$.
We first consider a slightly more general notion.

Let $\Omega$ be an open simply connected domain in $\CP^1$, whose
complement consists of more than one point (hence of infinitely many points).
Then $\Omega$ is conformally isomorphic to $\disk$.
We will also assume that the boundary of $\Omega$ is locally connected.
Then the closure in $\CP^1$ of any geodesic in $\Omega$ with respect to the Poincar\'e
metric on $\Omega$ consists of the geodesic itself and at most two limit points of it
that belong to $\d\Omega$.
The closure of a geodesic in $\Omega$ is called a {\em geodesic chord} of $\Omega$.
A {\em geodesic lamination} in $\Omega\subset\CP^1$ is a set of
geodesic chords in $\Omega$, whose union is closed.
Elements of a geodesic lamination $\Lc$ are called {\em leaves of} $\Lc$.
Let $Z$ and $W$ be prime ends of $\Omega$.
We will write $(ZW)_\Omega$ for the geodesic chord of
$\Omega$ connecting the prime ends $Z$ and $W$.
We allow for $Z=W$, in which case $(ZW)_\Omega$ is defined as the
single point, which is the prime end impression of $Z=W$,
and called a {\em degenerate leaf} of $\Lc$.
For convenience, we will assume that every geodesic lamination
contains all degenerate leaves $(ZZ)_\Omega$.
Note that there may be different prime ends $Z\ne W$ with the
same prime end impression; they still define a non-degenerate leaf $(ZW)_\Omega$,
which is a closed curve.

Let $f:\d\Omega\to\d\Omega$ be any continuous map that extends to
the set $\Omega$ as a proper orientation-preserving branched covering of degree two.
Then $f$ acts on prime ends of $\Omega$.
We say that a lamination $\Lc$ in $\Omega$ is {\em forward invariant} with respect
to $f$ if, for every leaf $\ell=(ZW)_\Omega\in\Lc$, the curve
$f[\ell]=(f(Z)f(W))_\Omega$ is also a leaf of $\Lc$, possibly degenerate.
We use the square brackets in the notation $f[\ell]$ to emphasize that
this curve is, in general, different from the image of the curve $\ell$
under the map $f$.

We now define the notion of an $f$-invariant lamination in $\Omega$.
This is a forward $f$-invariant lamination $\Lc$ that satisfies the
following properties:
for every leaf $\ell\in\Lc$, there is another leaf $\tilde\ell$ of $\Lc$ such that
$f[\ell]=f[\tilde\ell]$, and there are two leaves $\ell_1$ and $\ell_2$
such that $f[\ell_1]=f[\ell_2]=\ell$.

People usually consider laminations in the unit disk $\disk$
that are invariant under the map $\si_2:z\mapsto z^2$.
These laminations are called (Thurston) {\em quadratic invariant laminations}.
The boundary of the unit disk is the unit circle $\uc=\{z\in\C\,|\, |z|=1\}$.
The unit circle $\uc$ is identified with $\R/\Z$ by means of the map
$$
\theta\in\R/\Z\ \mapsto\ \ol{\theta}=e^{2\pi i\theta}\in\uc.
$$
If $z=\ol{a}$ and $w=\ol{b}$, then we write $\ol{ab}$ or $zw$ for
the leaf connecting $z$ with $w$.
In the case $\Omega=\disk$, we will identify prime ends with points
on the unit circle that are the corresponding prime end impressions.

Let $\Lc$ be a geodesic lamination in the unit disk, and $\Omega$
an arbitrary simply connected domain in $\CP^1$, whose complement
contains more than one point and whose boundary is locally connected.
We can transform the lamination $\Lc$ into a geodesic lamination
in $\Omega$ as follows.
Let $\phi:\disk\to\Omega$ be a Riemann map.
Suppose that we fixed the Riemann map, i.e. we specified the
point $\phi(0)$ and the argument of the derivative $\phi'(0)$.
By Caratheodory's theory, the map $\phi$ acts on prime ends, i.e.
$\phi(z)$ is a well-defined prime end in $\Omega$ for every $z\in\uc$.
With every leaf $\ell=zw\in\Lc$, we associate the curve
$\ell_\Omega=(\phi(z)\phi(w))_\Omega$.
The set of all such curves is a geodesic lamination $\Lc(\Omega,\phi)$ in $\Omega$.
We call this lamination {\em the ($\phi$-)image of the lamination $\Lc$ in $\Omega$}.
Sometimes, we write simply $\Lc(\Omega)$ if the choice of the Riemann map
is clear.
Clearly, any geodesic lamination in $\Omega$ is the image of some
geodesic lamination in the unit disk.
Moreover, if $f:\ol\Omega\to\ol\Omega$ is a continuous map such that
$f$ is holomorphic on $\Omega$ and has degree two, and the continuous
extension $\ol\phi:\ol\disk\to\ol\Omega$ of a Riemann map
$\phi:\disk\to\Omega$ semi-conjugates the map $\si_2$ on $\uc$ with the map
$f|_{\d\Omega}$, then any $f$-invariant geodesic lamination in $\Omega$ is
the $\phi$-image of some invariant quadratic lamination.

We now introduce some notions for laminations in the disk.
By the construction just described, they automatically carry over to
laminations in any simply connected domain $\Omega\subset\CP^1$,
whose complement consists of more than one point, and whose boundary
is locally connected.
Let $\Lc$ be a geodesic lamination in $\disk$.
{\em Gaps} of $\Lc$ are defined as closures (in $\CP^1$) of
connected components of $\disk-\bigcup\Lc$.
The {\em basis} of a gap $G$ is defined as $G'=\uc\cap G$.
Clearly, a gap is uniquely determined by its basis.
Gaps may be {\em finite} or {\em infinite} according to whether their
bases are finite or infinite.
The lamination $\Lc$ is called {\em clean} if, whenever two leaves of $\Lc$
share an endpoint, they are on the boundary of a finite gap.
If $\Lc$ is clean, then it defines an equivalence relation $\sim$ on $\CP^1$:
two different points $z$ and $w$ are equivalent if they belong
to the same leaf or a finite gap of $\Lc$.
One can prove using Moore's theorem \cite{Moore} that the quotient space $\CP^1/\sim$
is homeomorphic to the sphere.
Let $J_{\Lc}$ be the image of $\uc$ under the quotient projection
$\CP^1\to\CP^1/\sim$.
If $\Lc$ is a quadratic invariant lamination, then the map
$\si_2$ descends to a self-map $F_\Lc$ of $J_\Lc$.
Actually, the map $F_\Lc$ can be extended to the entire sphere $\CP^1/\sim$
as a branched covering (we keep the same notation $F_\Lc$ for the extended map).
This branched covering is called a {\em topological polynomial}.

Let $p_c(z)=z^2+c$ be a polynomial, whose Julia set $J_c$ is locally connected.
Then $p_c$ is topologically conjugate to a topological polynomial $F_\Lc$
corresponding to an invariant lamination $\Lc$.
In this sense, we will say that the lamination $\Lc$ {\em models} the polynomial $p_c$.

\begin{ex}
 Consider the quadratic polynomial $z^2-1$.
 Its critical point $0$ is periodic of period 2: $0\mapsto -1\mapsto 0$.
 The lamination $\Lc$ that models $z^2-1$ can be constructed as follows.
 Consider the chord $\ell_0=\ol{\frac 13\frac 23}$.
 Note that $\si_2[\ell_0]=\ell_0$.
 Set $\ell_1=-\ell_0$, i.e. the leaf $\ell_1$ is obtained from $\ell_0$ by
 the half-turn around the origin.
 The chords $\ell_0$ and $\ell_1$ divide the unit circle into four arcs.
 Divide each of these four arcs into four equal parts and connect
 the $1/4$ with the $3/4$ of this subdivision by an arc.
 In this way, we obtain 6 chords that divide the unit circle into 12 arcs.
 Repeat the same operation with each of these 12 arcs, and so on.
 All chords that can be obtained by this countable procedure form
 a $\si_2$-invariant lamination $\Lc$ that models $z^2-1$.
\end{ex}

\subsection{Critical leaf laminations}
Consider a point $\ol\vartheta$ in the unit circle.
It defines a diameter $\ell_0$ of the unit circle, namely, the geodesic chord connecting
the two $\sigma_2$-preimages $\ol{\frac\vartheta 2}$ and
$\ol{\frac{\vartheta+1}2}$ of $\ol\vartheta$.
We can now define an invariant quadratic lamination $\Lc_\vartheta$ as follows.
A {\em pullback} of $\ell_0$ is defined as any geodesic chord $zw$ such that,
for some integer $m>0$, we have $\si_2^{\circ m}(z)\si_2^{\circ m}(w)=\ell_0$,
and for all $i<m$, the geodesic chord $\si_2^{\circ i}(z)\si_2^{\circ i}(w)$
does not cross the leaf $\ell_0$ in $\disk$ (although it may have
an endpoint in common with $\ell_0$).
Consider the set of all pullbacks of $\ell_0$, and also all geodesic chords
obtained as limits of pullbacks with respect to the Hausdorff metric.
We obtain a quadratic invariant lamination $\Lc_\vartheta$, not necessarily clean.
The lamination $\Lc_\vartheta$ is called the {\em critical leaf lamination}.

There are two cases, in which the lamination $\Lc_\vartheta$ is unclean.

{\em Case 1:} one endpoint of $\ell_0$ is $\si_2$-periodic.
In this case, there is an infinite concatenation of leaves $\ell_0$,
$\ell_1$, $\ell_2$, $\dots$ of $\Lc_\vartheta$ such that
$\ell_{i+1}$ shares an endpoint with $\ell_i$ for every $i=0$, $1$, $2$, $\dots$.
There is also a periodic leaf such that one
of its endpoints is an endpoint of $\ell_0$, and the other
endpoint is the limit of the leaves $\ell_i$ as $i\to\infty$.
The infinite concatenation of leaves $\ell_i$ is called a {\em caterpillar}.
To make the unclean lamination $\Lc_\vartheta$ into a clean lamination,
we can apply the {\em cleaning procedure}, i.e. remove the caterpillar and all
its pullbacks.
What remains is a clean lamination $\Lc^c_\vartheta$ called the
{\em cleaning} of $\Lc_\vartheta$.

{\em Case 2:} no endpoint of $\ell_0$ is periodic; however, there are two
finite gaps of $\Lc_\vartheta$ such that $\ell_0$ is their common edge.
In this case, the cleaning of the lamination $\Lc_\vartheta$ is
the removal of the critical leaf $\ell_0$ and all its pullbacks.
Thus all leaves that survive this cleaning procedure are limits
of pullbacks of $\ell_0$; they form a clean lamination $\Lc_\vartheta^c$.
The lamination $\Lc_\vartheta^c$ has a finite critical gap, i.e.
a finite gap containing a pair of opposite (different by a half-turn)
points on the boundary.
It is proved in \cite{Thurston} that this gap is necessarily preperiodic.

Cleanings $\Lc^c_\vartheta$ of laminations $\Lc_\vartheta$,
whose critical leaves have no periodic endpoints, have the following meaning.
Consider a polynomial $p_c$ such that $c$ is in the Julia set $J_c$,
and the Julia set is locally connected.
Then $p_c$ is modeled by the lamination $\Lc^c_\vartheta$, where
the angle $\vartheta$ is chosen so that $\gamma_c(\vartheta)=c$
(there may be several angles satisfying this equality, they give rise to
the same cleaning $\Lc^c_\vartheta$).
The cleanings $\Lc_\vartheta^c$ of critical leaf laminations $\Lc_\vartheta$,
whose critical leaves have a periodic endpoint,
model polynomials, whose parameters are in the interior of
the Mandelbrot set.

\subsection{Matings}
An equivalent definition of a topological mating is the following.
Let $p_{c_1}$ and $p_{c_2}$ be quadratic polynomials with connected
and locally connected Julia sets.
Then $p_{c_1}$ is modeled by some quadratic invariant lamination $\Lc_1$,
and $p_{c_2}$ is modeled by some quadratic invariant lamination $\Lc_2$.
We can draw the leaves of $\Lc_1$ in the unit disk, and the leaves of $\Lc_2$
in the complement to $\disk$ in $\CP^1$ (which is also a disk), i.e.
we consider laminations $\Lc_1$ and $\Lc_2^{-1}=\Lc_2(\CP^1-\ol\disk)$.
The lamination $\Lc_2(\CP^1-\ol\disk)$ is formed using the map
$z\mapsto 1/z$ as the Riemann map for $\CP^1-\ol\disk$.
Thus the leaves of $\Lc_2^{-1}$ are images of the leaves of $\Lc_2$ under
the map $z\mapsto 1/z$.
We write $\ell^{-1}$ instead of $\ell_{\CP^1-\ol\disk}$ for the
image of the leaf $\ell\in\Lc_2$.
Let $\sim_1$ be the equivalence relation associated with $\Lc_1$,
and $\sim_2$ the equivalence relation associated with $\Lc_2^{-1}$
(i.e. two different points $z$ and $w$ are equivalent with respect to $\sim_2$
if they lie in the same leaf or finite gap of $\Lc_2^{-1}$).
Finally, let $\sim$ be the minimal equivalence relation on $\CP^1$ containing both
$\sim_1$ and $\sim_2$.
The mating space $X_{c_1,c_2}$ of $p_{c_1}$ and $p_{c_2}$ identifies with the
space $\CP^1/\sim$.
Clearly, the image of $\uc$ in $X_{c_1,c_2}$ is a quotient of $J_{\Lc_1}$, and,
simultaneously, a quotient of $J_{\Lc_2}$.
The map $\si_2$ descends to a continuous self-map of the quotient space
$\uc/\sim\subset X_{c_1,c_2}$.
This map coincides with the restriction of the mating map to $\uc/\sim$.
The set $\uc/\sim$, which can also be obtained by pasting the
Julia sets $J_{c_1}$ and $J_{c_2}$ together as described in the introduction
is called the {\em Julia set of the mating}.
If the mating is topologically conjugate to a rational function,
then the conjugacy takes the Julia set of the mating to the
Julia set of this rational function.
Note that this construction explains the minus sign in the definition
of matings: the image of a point $\ol\theta$ on the unit circle under the map
$z\mapsto 1/z$ is the point $\ol{-\theta}$.

The two definitions of matings can be combined into the following non-symmetric
construction.
Consider the filled Julia set $K_{c_1}$ of the first polynomial $p_{c_1}$,
and the quadratic invariant lamination $\Lc_2$ that models the second
polynomial $p_{c_2}$.
By the B\"ottcher theorem, there exists a unique Riemann map
$\phi_{c_1}:\disk\to\CP^1-K_{c_1}$ that conjugates the map $z\mapsto z^2$
with the map $p_{c_1}$ restricted to the basin of infinity.
Clearly, $\phi_{c_1}$ must map $0$ to $\infty$.
We can now take the image of the lamination $\Lc_2$ in the basin
of infinity $\Omega_{c_1}=\CP^1-K_{c_1}$.
Taking the quotient of the sphere by the equivalence relation
generated by the lamination $\Lc_2(\Omega_{c_1})$, we also obtain
a topological model for the mating $p_{c_1}\sqcup p_{c_2}$.
We will use this non-symmetric construction below, when discussing
topological models for captures.

The non-symmetric construction of matings can be generalized to the
case, where $\Lc_2$ does not necessarily model a quadratic polynomial.
Thus we can talk about a mating of a polynomial and an invariant lamination.
Similarly, we can talk about a mating of two invariant laminations.

\subsection{Internal and external angles}
Let $p_c(z)=z^2+c$ be a polynomial such that the critical point $0$
of $p_c$ is periodic of minimal period $k$.
Let $A$ be the Fatou component of $p_c$ containing $0$.
The map $p_c^{\circ k}$ takes $A$ to itself.
By the B\"ottcher theorem, there exists a conformal isomorphism
$\psi:\disk\to A$ that conjugates the map $z\mapsto z^2$ with the restriction
of $p_c^{\circ k}$ to $A$.
It is well-known that the Julia set of $p_c$, as well as the boundary of
any Fatou component of $p_c$, are locally connected.
Therefore, the map $\psi$ admits a continuous extension $\ol\psi:\ol\disk\to\ol A$.
A point of $\d A$ of {\em internal angle} $\varkappa$ is defined as
$\ol\psi(\ol\varkappa)$.
Since $\d A$ is homeomorphic to the circle, the map $\ol\psi$ must
be a homeomorphism (as follows e.g. from Caratheodory's theory), and
then internal angles are in one-to-one correspondence with points of $\d A$.

If $B$ is any other bounded Fatou component of $p_c$, then there is a minimal
integer $m$ such that $p_c^{\circ m}(B)=A$.
The map $p_c^{\circ m}$ is then a homeomorphism between the closure of
$B$ and the closure of $A$.
Using this homeomorphism, we define internal angles for the boundary
points of $B$.
Thus any point on the boundary of any bounded Fatou component of $p_c$
has a well-defined internal angle with respect to this bounded Fatou component.

On the other hand, any point $z$ of $J_c$ has the form $\gamma_c(\theta)$
for some angle $\theta\in\R/\Z$.
This angle is called an {\em external angle} of $z$.
The same point can have several different external angles.
If the point $z$ is on the boundary of some bounded Fatou component of $p_c$,
then there are two ways of identifying the point $z$:  1) we can just
specify the external angle of $z$, and 2) we can specify the Fatou component,
whose boundary contains the point $z$ and the internal angle of $z$ with
respect to this component.

\subsection{Captures}
Capture is an operation making polynomials into (models of) rational functions.
It was first introduced in the thesis of B. Wittner \cite{Wittner} in 1988.
Similar to matings, captures can be defined in terms of topological models.
However, an easier definition (due to M. Rees \cite{Rees_description})
uses combinatorial equivalence classes.
Combinatorial equivalence is a certain equivalence relation on the
set of orientation preserving topological branched self-coverings of the
sphere that are {\em critically finite}, i.e. every critical point gets
eventually mapped to a periodic cycle (such coverings are called
{\em Thurston maps}).
The post-critical set $P_f$ of a Thurston map $f$ is defined as the minimal
forward invariant set containing all critical values.
Two Thurston maps $f$ and $g$ are called {\em combinatorially equivalent}
(or {\em Thurston equivalent}) if $f$ is homotopic, relative to the set $P_f$,
to a Thurston map $h\circ g\circ h^{-1}$ topologically conjugate to $g$
(i.e. $h$ is an orientation preserving self-homeomorphism of the sphere, and
the homotopy connecting $f$ to $h\circ g\circ h^{-1}$ consists of Thurston maps with the
same post-critical set $P_f$).

{\em Thurston's rigidity theorem} claims that, with few exceptions that
can be explicitly described, any combinatorial equivalence class of
Thurston maps contains at most one rational function.
In particular, this is true for {\em hyperbolic Thurston maps}, i.e. Thurston maps
such that every critical point gets eventually mapped to a cycle containing
a critical point.
Thus a combinatorial equivalence class of hyperbolic Thurston maps
either contains exactly one rational function, or contains no rational functions
at all.
{\em Thurston's characterization theorem}
\cite{DH} provides a topological criterion
distinguishing these two cases.

Consider a critically finite polynomial $p_c(z)=z^2+c$ such that $0$ is
a periodic point of $p_c$ of some minimal period $k$.
Let $v$ be some strictly preperiodic point of $p_c$ that gets eventually
mapped to $0$.
Then $v$ lies in some interior component $V$ of $K_c$.
Let $O(v)$ denote the forward orbit of $v$.
By our assumption, $0\in O(v)$ but $O(v)$ is different from the orbit of $0$.
Let us choose a point $b$ on the boundary of $V$.
The operation of capture is almost determined by the choice of the two points
$v$ and $b$.
Let $\beta:[0,1]\to\CP^1$ be a simple path with the following properties:
$\beta(0)=\infty$, $\beta(1/2)=b$, $\beta(1)=v$, and the intersection
$\beta[0,1]\cap J_c$ consists of only one point $b$.
A path with these properties is called a {\em capture path}.
What is really enough to know to define a capture is the pair of
points $v$, $b$ plus the homotopy class of the path
$\beta:[0,1/2]\to\Omega_c\cup\{b\}$ with fixed endpoints.
Note that this homotopy class is determined by the choice of an external angle of $b$.
This angle will also be called the {\em external angle of the capture path} $\beta$.
Define a {\em path homeomorphism} $\sigma_\beta$ as a self-homeomorphism
of the sphere that is equal to the identity outside a small neighborhood
of $\beta[0,1]$ (i.e. outside a ``narrow tube'' around $\beta[0,1]$)
and such that $\sigma_\beta(\infty)=v$.

Note that $\sigma_\beta\circ p_c$ is a topological branched covering,
whose homotopy class relative to $O(v)$ is well defined (provided that
the neighborhood of $\beta[0,1]$, in which $\sigma_\beta$ is different
from the identity, is small enough so that it does not intersect
$O(v)$)\footnote{Two branched coverings $f_0$ and $f_1$ are homotopic relative to a
set $O$ if there is a homotopy $\phi_t$, $t\in [0,1]$ consisting of
homeomorphisms such that $\phi_t=id$ on $O$ for all $t$, and
$\phi_0\circ f_0=f_1\circ\phi_1$}.
Note also that this covering is critically finite, with post-critical
set $O(v)$ (the critical point $\infty$ is mapped to $v$ and then gets
eventually mapped to $0$).
Thus the combinatorial class of $\sigma_\beta\circ p_c$ is well defined.
The map $\sigma_\beta\circ p_c$ (or rather its combinatorial class)
is called a {\em formal capture} of $p_c$.
An explicit description of all paths $\beta$, for which the formal capture
is Thurston equivalent to a rational function is known by a result of
Mary Rees and Tan Lei.
We will state this result later.
A rational function that is combinatorially equivalent to the formal
capture is called a {\em conformal capture}.

\subsection{Capture paths define matings}
We now start describing topological models for captures.
These models, due to M. Rees \cite{Rees_description}, reveal a close
connection between captures and matings.
Consider a quadratic polynomial $p_c$ such that $p_c^{\circ k}(0)=0$,
a capture path $\beta$ and points $v=\beta(1)$ and $b=\beta(1/2)$.
Let $\vartheta$ be the external angle of the capture path $\beta$.

We now form the critical leaf lamination $\Lc_\vartheta$.
In our case, the angle $\vartheta$ cannot be periodic:

\begin{lem}
\label{l:str-per}
  If $\vartheta\in\R/\Z$ is a periodic angle (with respect to the angle
  doubling map), then the point $\gamma_c(\vartheta)$ cannot belong
  to the boundary of a strictly preperiodic Fatou component of $p_c$.
\end{lem}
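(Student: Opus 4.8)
The plan is to argue by contradiction and to reduce the statement to the standard fact that a repelling periodic point of a hyperbolic polynomial lies on the boundary of \emph{periodic} Fatou components only. The engine of the proof is a permutation argument on the Fatou components touching the landing point of $\vartheta$.

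First I would translate the hypothesis on $\vartheta$ into a statement about its landing point. Since $\gamma_c$ semiconjugates angle doubling with $p_c$, i.e. $\gamma_c(2\theta)=p_c(\gamma_c(\theta))$, a periodic angle $\vartheta$ yields $p_c^{\circ n}(z)=\gamma_c(2^n\vartheta)=\gamma_c(\vartheta)=z$ for $z:=\gamma_c(\vartheta)$, where $n$ is the period of $\vartheta$; thus $z$ is a periodic point, and I let $n$ now denote its exact period (a divisor of the period of $\vartheta$). Because $0$ is periodic, $p_c$ is hyperbolic and its only non-repelling cycle is the superattracting critical cycle, which lies in the Fatou set; hence $z\in J_c$ is a repelling periodic point, and in particular $z\neq 0$ is not a critical point. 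The same holds for every point of the orbit of $z$, which stays in $J_c$, so $p_c^{\circ n}$ is a local homeomorphism near $z$ and admits a local inverse branch $g$ fixing $z$.

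Now suppose, for contradiction, that $z\in\partial V$ for a strictly preperiodic Fatou component $V$. Let $\mathcal F_z$ be the set of all Fatou components having $z$ on their boundary, and define $T\colon\mathcal F_z\to\mathcal F_z$ by $T(W)=p_c^{\circ n}(W)$. This is well defined: since $p_c^{\circ n}(z)=z$ and $p_c^{\circ n}$ is a local homeomorphism at $z$, it carries components touching $z$ to components touching $z$. Using $g$ one sees that $T$ is also surjective, since for each $W'\in\mathcal F_z$ the image under $g$ of the part of $W'$ near $z$ is a connected piece of the Fatou set accumulating at $z$, hence lies in a component $W\in\mathcal F_z$ with $T(W)=W'$. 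Granting that $\mathcal F_z$ is \emph{finite}, surjectivity forces $T$ to be a permutation of $\mathcal F_z$; consequently every element of $\mathcal F_z$, in particular $V$, lies on a finite $T$-cycle and is therefore $p_c$-periodic. This contradicts the strict preperiodicity of $V$ and proves the lemma.

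The only delicate point, and the step I expect to be the main obstacle, is the finiteness of $\mathcal F_z$: that only finitely many Fatou components abut the repelling periodic point $z$. I would obtain this from the local connectivity of $J_c$ (valid here, as $p_c$ is critically finite) together with the classical fact that only finitely many external rays land at a repelling periodic point: these finitely many landing rays organize a punctured neighborhood of $z$ into finitely many sectors and bound how many distinct Fatou components can accumulate at $z$. Alternatively, after applying a suitable iterate $p_c^{\circ m}$ one may push $z$ onto the boundary of the critical Fatou component $A$, whose closure maps homeomorphically from that of $V$ and whose boundary is a Jordan curve on which the first-return map acts as angle doubling; finiteness can then be read off there. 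Once this finiteness is established, the permutation argument above closes the proof.
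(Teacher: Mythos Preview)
Your argument is correct and runs parallel to the paper's, but in a different setting. Both proofs are at heart a permutation argument on a finite set attached to the periodic point, yet you work directly in the dynamical plane with the set $\mathcal F_z$ of Fatou components touching the repelling periodic point $z=\gamma_c(\vartheta)$ and the first-return map $T=p_c^{\circ n}$, while the paper stays inside the invariant lamination $\Lc$ modeling $p_c$. There one locates the finite gap or leaf $g$ having $\ol\vartheta$ as a vertex and tracks the \emph{co-oriented edges} of $g$, which play exactly the role of your Fatou components; since the edge coming from the strictly preperiodic gap $G$ never returns under iteration, the paper concludes that $g$ must eventually land on a finite \emph{critical} gap, which by a result of Thurston is strictly preperiodic---contradicting the periodicity of $\ol\vartheta$. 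Your route avoids this appeal to Thurston's classification of critical gaps and is more self-contained dynamically; the cost is that you must supply the finiteness of $\mathcal F_z$ separately, which you rightly identify as the crux. Your sector argument from the finitely many external rays landing at $z$ is on the right track but a bit underspecified: the cleanest way to close it is to observe that those landing angles are precisely the $\sim$-class of $\ol\vartheta$ in $\Lc$, and a finite class can border only finitely many infinite gaps, hence only finitely many bounded Fatou components meet $z$. In the paper's lamination framework this finiteness is automatic (a finite gap has finitely many edges), which is the main convenience that setting buys.
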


\begin{proof}
The polynomial $p_c$ is modeled by some quadratic invariant lamination $\Lc$
in $\disk$.
If the point $\gamma_c(\vartheta)$ belongs to the boundary of some strictly
preperiodic component of $p_c$, then the point $\ol\vartheta$ belongs to the
boundary of some strictly preperiodic gap $G$ of $\Lc$.
Let $m$ be the period length of $\ol\vartheta$.
A certain iterate $s$ of the map $\si_2^{\circ m}$ takes $G$ to a periodic gap
$s(G)$ (this means that the basis $G'$ of the gap $G$
is mapped under $s$ to the basis of some periodic gap, which we denote by $s(G)$).
On the other hand, we have $s(\ol\vartheta)=\ol\vartheta$.
Since $\ol\vartheta$ belongs to the boundaries of two different gaps
$G$ and $s(G)$, there
must be a leaf of $\Lc$ having $\ol\vartheta$ as an endpoint.
Let $g$ be this leaf (if it is not a part of a finite gap) or a finite gap
containing this leaf.
{\em Edges} of the gap (or leaf) $g$ are defined as leaves on the boundary of $g$.
{\em Vertices} of $g$ are defined as points of $g\cap\uc$.
Then $\ol\vartheta$ is one of the vertices of $g$.
Consider {\em co-oriented edges} of $g$, i.e. edges of $g$ equipped with
a choice of an outer side of $g$, a side on which there are no
vertices of $g$ apart from the endpoints of the given edge.
If $g$ is not a leaf, then co-oriented edges are the same as edges:
for every edge, there is only one outer side.
If $g$ is a leaf, then $g$ has two different co-oriented edges, one for
each side of $g$.

Every gap of $\Lc$ adjacent to $g$ defines a co-oriented edge of $g$.
Let $\ell_G$ be the co-oriented edge of $g$ defined by the gap $G$.
Then $\ell_G$ gets mapped to $\ell_{s(G)}$ but $\ell_{s(G)}$ never
maps back to $\ell_G$ under the iterates of $s$.
It follows that $g$ gets eventually mapped under $\si_2$ to a finite critical
gap of $\Lc$.
However, a finite critical gap is always strictly preperiodic, as follows
from \cite{Thurston}.
A contradiction.
\end{proof}

Since $\vartheta$ is not periodic, the endpoints of the critical geodesic chord
$\ell_0=\ol{\frac\vartheta 2\frac{\vartheta+1}2}$ are not periodic either.
Therefore, $\Lc_\vartheta$ is either a clean lamination with the critical leaf
$\ell_0$, or an unclean lamination, whose cleaning has a finite gap containing $\ell_0$.
Consider the lamination $\Lc_\vartheta(\Omega_c)$.
The lamination $\Lc_\vartheta(\Omega_c)$ defines an equivalence relation
$\approx_{c,\beta}$ on $\CP^1$.
This is the minimal equivalence relation such that every leaf and every
finite gap of $\Lc_\vartheta(\Omega_c)$ belongs to some equivalence class.
The quotient space of $\CP^1$ by the equivalence relation $\approx_{c,\beta}$
(together with a natural map defined on this space) is the mating
$p_{c_1}\sqcup\Lc_\vartheta$ of the polynomial $p_{c_1}$ and the lamination
$\Lc_\vartheta$.

Recall from \cite{Thurston} that $\Lc_\vartheta$ has a
unique finite invariant gap or non-degenerate leaf.
We will call this gap or leaf the {\em central gap} of $\Lc_\vartheta$.
We can now state the result of M. Rees and Tan Lei \cite{TanLei}:

\begin{thm}
  The Thurston map $\sigma_\beta\circ p_c$ is combinatorially equivalent
  to a rational function if and only if the image of the central gap of
  $\Lc_\vartheta$ in $\Lc_\vartheta(\Omega_c)$ does not separate the sphere.
\end{thm}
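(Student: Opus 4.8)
The plan is to deduce this from Thurston's characterization theorem, with the whole difficulty concentrated in pinning down the \emph{unique} possible obstruction and matching it with the separation condition. First I would note that $\sigma_\beta\circ p_c$ is a \emph{hyperbolic} Thurston map: the critical point $0$ of $p_c$ is periodic, so its cycle contains a critical point, and the second critical point $\infty$ is sent to $v$, which is strictly preperiodic to that same cycle. Both critical orbits therefore land in a cycle containing a critical point, so the exceptional cases of Thurston rigidity are excluded, and by the characterization theorem $\sigma_\beta\circ p_c$ is combinatorially equivalent to a rational function if and only if it carries no Thurston obstruction. Using the Rees model described above, I would replace $\sigma_\beta\circ p_c$ by the combinatorially equivalent topological mating $p_c\sqcup\Lc_\vartheta$ acting on $\CP^1/\approx_{c,\beta}$ and look for obstructions there. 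The postcritical set is the orbit $O(v)$, which lies entirely in the $K_c$-part of the mating space.

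By the theory of quadratic matings (Tan Lei), for such maps any Thurston obstruction can be reduced to a \emph{L\'evy cycle}: a family of disjoint nonperipheral simple closed curves cyclically permuted by the map, each curve the image of the next under a \emph{degree-one} restriction. The transition matrix of a L\'evy cycle is a permutation matrix, so its leading eigenvalue is $1$ and the cycle automatically obstructs. It therefore suffices to prove the reformulated statement: the topological mating admits a L\'evy cycle if and only if the image of the central gap separates the sphere.

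For the easy direction I would assume the image $P=\phi_c(G_0)$ of the central gap $G_0$ separates the sphere. Since $G_0$ is the unique finite \emph{invariant} gap, it is \emph{not} the critical gap (the critical value $\vartheta$ is preperiodic, not periodic, by Lemma \ref{l:str-per}), so $\si_2$ permutes the vertices of $G_0$ bijectively and the mating map sends $\partial P$ to itself by a degree-one map. After the vertex identifications that $\gamma_c$ induces on $J_c$, a separating $P$ forces $\partial P$ to contain an essential simple closed curve $\gamma$ that separates $O(v)$ and is invariant up to homotopy under a degree-one restriction; this $\gamma$ is a length-one L\'evy cycle, so the mating is obstructed. (Geometrically, collapsing a separating $P$ turns $\CP^1/\approx_{c,\beta}$ into a one-point union of two spheres rather than a sphere, which is the same phenomenon viewed topologically.)

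The converse is the hard part and the main obstacle: assuming $P$ does \emph{not} separate, I must exclude \emph{every} L\'evy cycle. The key structural step is that the degree-one condition forces each curve of a putative L\'evy cycle to avoid the image of the critical leaf $\ell_0$, so it is freely homotopic to a curve tightly enclosing a periodic cycle of finite gaps of $\Lc_\vartheta(\Omega_c)$. Tracing these gaps back through the pullback structure of the critical leaf lamination, and using that $G_0$ is the \emph{unique} finite invariant gap, I would show any such periodic gap cycle must be the cycle of $G_0$ itself, so every L\'evy cycle is homotopic to $\partial P$. But if $P$ does not separate, $\partial P$ bounds a disk in the complement of $O(v)$ and is peripheral, so no L\'evy cycle exists; by Thurston's theorem the map is then equivalent to a rational function. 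The delicate point throughout is the precise dictionary between the combinatorial ``separation of the sphere by the central gap'' and the topological ``essential, $O(v)$-separating, degree-one-invariant curve,'' which rests on a careful analysis of how $\gamma_c$ identifies the vertices of $G_0$ on $J_c$.
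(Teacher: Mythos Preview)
The paper does not give its own proof of this theorem: it is stated as ``the result of M.~Rees and Tan~Lei'' with a citation to \cite{TanLei}, and the paper then uses it as a black box. So there is no argument in the paper to compare yours against. I will comment on your sketch on its own terms.

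Your overall strategy --- apply Thurston's characterization, reduce any obstruction to a L\'evy cycle, and identify the unique possible L\'evy cycle with the boundary of the central gap --- is indeed the strategy of \cite{TanLei}. But there is a genuine circularity in the way you set it up. You propose to ``replace $\sigma_\beta\circ p_c$ by the combinatorially equivalent topological mating $p_c\sqcup\Lc_\vartheta$ acting on $\CP^1/\approx_{c,\beta}$ and look for obstructions there.'' The problem is that $\CP^1/\approx_{c,\beta}$ is not known to be a sphere at this point; in the paper, that fact is Corollary~\ref{c:mating-sphere}, which is deduced from Theorem~\ref{t:topcap}, which in turn \emph{assumes} the very statement you are trying to prove. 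Combinatorial equivalence and Thurston obstructions are only defined for branched coverings of $S^2$, so you must carry out the L\'evy-cycle analysis on the genuine sphere, for the map $\sigma_\beta\circ p_c$ (or the formal mating), using the two-sided lamination $\Lc\cup\Lc_\vartheta^{-1}$ as a combinatorial bookkeeping device \emph{before} any collapsing.

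A second point: in the ``easy'' direction you assert that a separating image of the central gap yields a curve that ``separates $O(v)$.'' This needs justification. The image of the central gap lives in $\ol\Omega_c$ and meets $K_c$ only at finitely many points of $J_c$; since $K_c$ is connected, the mere separation of the sphere does not automatically put postcritical points on both sides. What actually happens is that the vertices of the central gap land at periodic cut points of $K_c$, and those cut points separate the periodic Fatou components in the orbit of $0$; one must check that each complementary piece contains at least two points of $O(v)$, so that the resulting curve is nonperipheral. Conversely, in the hard direction you say that any L\'evy curve ``must be the cycle of $G_0$ itself''; this is the crux of Tan~Lei's argument and requires the orbit-portrait analysis (essentially what the paper invokes in the proof of Lemma~\ref{l:no-per}), not just the uniqueness of the invariant gap.
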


We will need the following lemma:

\begin{lem}
\label{l:no-per}
  Suppose that the image of the central gap of $\Lc_\vartheta$ in
  $\Lc_\vartheta(\Omega_c)$ does not separate the sphere.
  Then no periodic leaf of $\Lc_\vartheta(\Omega_c)$ is a closed curve.
\end{lem}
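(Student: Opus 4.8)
The plan is to prove the contrapositive: assuming that some periodic leaf of $\Lc_\vartheta(\Omega_c)$ is a closed curve, I will show that the image of the central gap separates the sphere, by producing a Thurston obstruction and invoking the theorem of Rees and Tan Lei stated above.

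First I would unwind the hypothesis. A leaf of $\Lc_\vartheta(\Omega_c)$ is the $\phi_c$-image of a leaf $zw$ of $\Lc_\vartheta$, and it is a closed curve exactly when $\gamma_c(z)=\gamma_c(w)$. If this leaf is periodic, then $z$ and $w$ are periodic angles, so their common landing point $\zeta:=\gamma_c(z)=\gamma_c(w)$ is a repelling periodic point of $p_c$ at which the external rays $R_z$ and $R_w$ co-land; in particular $\zeta$ is a cut point of $K_c$. Writing $\ell^*:=\phi_c(zw)$, this is a Jordan curve through $\zeta$, one of whose complementary disks is the limb $L$ of $K_c$ cut off by the arc $(z,w)$. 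From the semiconjugacy $\gamma_c\circ\si_2=p_c\circ\gamma_c$ one sees that every forward image is again a closed curve, giving a finite cycle $\ell^*_0,\dots,\ell^*_{n-1}$ of disjoint Jordan curves cyclically permuted by $p_c$.

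Next I would recognise this cycle as a Levy cycle for the capture map $\si_\beta\circ p_c$. Away from the narrow tube around $\beta$ the homeomorphism $\si_\beta$ is the identity, so there $\si_\beta\circ p_c$ coincides with $p_c$ and carries each $\ell^*_i$ homeomorphically, with degree one, onto $\ell^*_{i+1}$. It then remains to establish essentiality: each $\ell^*_i$ should be non-peripheral with respect to the post-critical set $O(v)$, i.e. both of its complementary disks should contain at least two points of $O(v)$, and the curves should lie in $\Omega_c$ away from the $\beta$-tube. Here hyperbolicity of $p_c$ enters (the superattracting critical cycle forces every limb of $K_c$ to contain Fatou components, which are eventually periodic and hence lie in the critical cycle, a part of $O(v)$), together with Lemma \ref{l:str-per} applied to the periodic angle $z$, which excludes $\zeta$ from the boundary of any strictly preperiodic Fatou component and so pins down how the limbs of the cycle meet the post-critical set.

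Finally, a Levy cycle is a Thurston obstruction, so by Thurston's characterization the Thurston map $\si_\beta\circ p_c$ is not combinatorially equivalent to a rational function. By the theorem of Rees and Tan Lei recalled above, this can happen only if the image of the central gap of $\Lc_\vartheta$ in $\Lc_\vartheta(\Omega_c)$ separates the sphere, which is exactly the contrapositive of the lemma. I expect the essentiality step to be the main obstacle: the real work is to show that each closed curve $\ell^*_i$ is non-peripheral, that both complementary disks carry at least two post-critical points, and that the cycle can be isotoped off the capture tube, thereby ruling out a peripheral configuration; this is precisely where hyperbolicity of $p_c$ and Lemma \ref{l:str-per} are required.
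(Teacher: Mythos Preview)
Your approach is genuinely different from the paper's, and in fact is close to the route the paper explicitly sets aside: the remark before the proof notes that the lemma can be deduced from the topological models for captures in \cite{Rees_description}, but that a more direct argument will be given instead. The paper's proof is purely combinatorial and never mentions Thurston obstructions. It argues that if a periodic leaf $\ell\in\Lc_\vartheta$ has closed image in $\Omega_c$, then the complex conjugate chord $\ell^*$ must be a leaf of the lamination $\Lc$ modelling $p_c$, so the orbit portrait $\Oc$ generated by $\ell$ has $\Oc^*\subset\Lc$. The central gap $G$ of $\Lc_\vartheta$ is a \emph{principal} orbit portrait co-existing with $\Oc$; hence $G^*$ co-exists with $\Oc^*$, and by uniqueness of the principal orbit portrait co-existing with a given one, $G^*$ must be the central gap of $\Lc$. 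This forces the image of $G$ in $\Omega_c$ to separate the sphere. The whole argument lives in the lamination picture.

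Your Levy-cycle strategy is reasonable in outline but has a real gap exactly where you flag it, and the gap is not merely technical. To make the cycle $\ell^*_0,\dots,\ell^*_{n-1}$ into a Levy cycle for $\si_\beta\circ p_c$ you need every $\ell^*_i$ to be non-peripheral relative to $O(v)$. Your sketch (``limbs contain Fatou components, which are eventually periodic and hence in the critical cycle'') does not establish this: a limb of $K_c$ cut off by a given $\ell^*_i$ certainly contains bounded Fatou components, but these are generically strictly preperiodic and need not meet either the super-attracting $k$-cycle or the finite preperiodic tail of $v$, so the limb may contain no points of $O(v)$ at all. Showing that each limb meets the critical cycle is essentially equivalent to showing that the periodic orbit portrait $\Oc^*$ separates the central gap of $\Lc$ from its images, which is precisely the orbit-portrait statement the paper proves. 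Lemma~\ref{l:str-per} does not help here: it only tells you that the periodic landing point $\zeta$ is not on the boundary of a strictly preperiodic Fatou component, not that the limb behind $\zeta$ contains post-critical points. The isotopy off the $\beta$-tube is a further issue, since $\beta[0,1/2]\subset\Omega_c$ can cross the curves $\ell^*_i$. So as written the proposal does not close; completing it seems to require either the orbit-portrait uniqueness the paper uses, or the full Rees--Tan Lei machinery you are invoking at the end anyway.
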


The statement of the lemma follows from topological models for captures
described in \cite{Rees_description}, see Subsection \ref{ss:topcap}.
However, we give a more direct proof here.

\begin{proof}
  We first recall a general statement about orbit portraits.
  An {\em orbit portrait} can be defined as a $\si_2$-periodic cycle
  of geodesic chords of $\disk$ that have no intersection points in $\disk$.
  Every orbit portrait $\Oc$ defines a {\em wake} in the parameter plane
  of complex polynomials.
  The wake corresponding to an orbit portrait $\Oc$ consists of all
  parameter values $c$ such that, for every $\ol{ab}\in\Oc$, the
  external rays of angles $a$ and $b$ in the dynamical plane of the polynomial $p_c$
  land at the same point.
  It is proved in \cite{Milnor-Mand} that every wake is bounded by
  two external parameter rays that land at the same point (this point is
  called the {\em root point} of the wake).
  We say that two orbit portraits {\em co-exist} if they either coincide
  or have no intersection points in $\disk$.
  Note that every finite gap or leaf, whose vertices are permuted by
  $\si_2$ preserving their cyclic order, defines an orbit portrait.
  Orbit portraits obtained in this way are called {\em principal orbit portraits}.
  Every orbit portrait co-exists with exactly one principal orbit portrait.
  This classical statement can be easily proved either by methods
  of \cite{Milnor-Mand} or using the minor leaf theory of \cite{Thurston}.
  In terms of the parameter plane of complex polynomials, this statement
  means that every wake lies in a {\em principal wake}, whose root point
  belongs to the main cardioid of the Mandelbrot set.

  Suppose now that some periodic leaf $\ell_{\Omega_c}$ of $\Lc_\vartheta(\Omega_c)$
  is a closed curve.
  The corresponding periodic leaf $\ell$ of $\Lc_\vartheta$ defines
  an orbit portrait $\Oc$.
  Since $\ell_{\Omega_c}$ is a closed curve, the geodesic chord $\ell^*$
  that is obtained from $\ell$ by complex conjugation must belong to
  the lamination $\Lc$ that models the polynomial $p_c$.
  Therefore, the lamination $\Lc$ contains the conjugate orbit portrait $\Oc^*$.
  Let $G$ be the central gap of $\Lc_\vartheta$.
  It defines a principal orbit portrait, for which we will use the same letter $G$.
  Obviously, $G$ co-exists with $\Oc$.
  It follows that the complex conjugate principal orbit portrait $G^*$
  co-exists with $\Oc^*$.
  Since there is only one principal orbit portrait co-existing with $\Oc^*$,
  the central gap of $\Lc$ must coincide with $G^*$.
  It follows that the image of the central gap $G$ of $\Lc_\vartheta$ in $\Omega_c$
  disconnects the sphere.
\end{proof}

We will now assume that the Thurston map $\sigma_\beta\circ p_c$ is
combinatorially equivalent to a rational function.
In the next section, we describe a topological model for this
conformal capture due to M. Rees.
We will need the following property of the mating $p_c\sqcup\Lc_\vartheta$:

\begin{prop}
\label{p:A}
 Let $\pi$ be the canonical projection from the filled Julia set $K_c$
 to the mating space of $p_c\sqcup\Lc_\vartheta$.
 If $A$ is the Fatou component of $p_c$ containing the critical point $0$,
 then the restriction of $\pi$ to the closed curve $\d A$ is a homeomorphism.
\end{prop}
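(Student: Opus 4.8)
The plan is to reduce the statement to injectivity and then to rule out non-trivial identifications on $\d A$ using the dynamical structure of $\Lc_\vartheta$ together with Lemma~\ref{l:no-per}. Since $\d A$ is compact and $\pi$ is continuous, it suffices to show that $\pi|_{\d A}$ is injective. Let $\Lc$ be the invariant lamination modeling $p_c$, and let $G_A$ be the infinite critical periodic gap of $\Lc$ whose basis $G_A'\subset\uc$ is the set of external angles landing on $\d A$; thus points of $\d A$ correspond to vertices of $G_A$, and two vertices give the same point precisely when they are endpoints of a common edge of $G_A$. Suppose, for contradiction, that two distinct points $x\ne y$ of $\d A$ satisfy $x\approx_{c,\beta}y$. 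Unwinding the definition of $\approx_{c,\beta}$, this means $x$ and $y$ are joined through a finite chain of leaves and finite gaps of $\Lc_\vartheta(\Omega_c)$; pulling back by the Riemann map $\phi_c$, the relevant endpoints become external angles in $G_A'$ that are not edge-related in $\Lc$.

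First I would localize the obstruction to a single leaf or finite gap of $\Lc_\vartheta$ whose image meets $\d A$ in two distinct points. Since $\Lc_\vartheta$ is a critical leaf lamination, its finite gaps are iterated preimages of the central gap and its non-degenerate leaves are pullbacks of the critical leaf $\ell_0$ (or limits of such). The key observation is that the $A$-cycle of Fatou gaps $\{G_{p_c^{\circ j}A}\}$ is disjoint from the $\si_2$-orbit of $\ell_0$: the endpoints of $\ell_0$ land on boundaries of strictly preperiodic Fatou components (they map to $b\in\d V$, and $\vartheta$ was the external angle of $b$, see Lemma~\ref{l:str-per}), which are not in the periodic cycle of $A$. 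Hence a pullback leaf cannot have both endpoints in $G_A'$: its forward $\si_2$-orbit would remain inside the periodic cycle $\{G_{p_c^{\circ j}A}\}$ and so could never reach $\ell_0$. Therefore any offending leaf is a limit of pullbacks whose entire forward orbit is trapped in this periodic cycle of gaps, and by $\si_2^{\circ k}$-invariance of $G_A$ together with compactness one extracts a periodic leaf $\ell$ of $\Lc_\vartheta$ with both endpoints in $G_A'$.

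The heart of the argument, and the step I expect to be hardest, is to show that such a periodic leaf forces a closed curve and thereby contradicts Lemma~\ref{l:no-per}. Here I would exploit that $A$ contains the critical point, so $p_c^{\circ k}$ acts on $\ol A$ as a degree-two branched covering and on $G_A$ as a degree-two self-map ramified at the centre. A leaf of $\Lc_\vartheta$ inside $G_A$ has, by the $\si_2$-invariance of $\Lc_\vartheta$, two disjoint preimage leaves inside $G_A$ related by the deck involution of this degree-two map; iterating the pullback inside $G_A$ and using that $\Lc_\vartheta$ has no critical leaf or critical gap inside $G_A$ (its critical leaf $\ell_0$ lies over $\d V$), one sees that the only way to keep all these preimages pairwise non-crossing is for $\ell$ to be an edge of $G_A$ itself. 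But an edge of $G_A$ joins two $\Lc$-edge-related angles, i.e.\ two external angles landing at a single point of $J_c$; hence the corresponding periodic leaf of $\Lc_\vartheta(\Omega_c)$ is a closed curve, contradicting Lemma~\ref{l:no-per}. This contradiction shows that no two distinct points of $\d A$ are identified, so $\pi|_{\d A}$ is injective and thus, by compactness, a homeomorphism onto its image. The delicate points throughout are matching the two angle circles correctly under the orientation-reversing Riemann map $\phi_c$ of $\Omega_c$, and carrying out the non-crossing/degree-two bookkeeping inside $G_A$; this is precisely where the criticality of $A$ is essential and where I would spend the most care.
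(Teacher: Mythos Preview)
Your argument has a genuine gap at its central step. You claim that ``$\Lc_\vartheta$ has no critical leaf or critical gap inside $G_A$ (its critical leaf $\ell_0$ lies over $\d V$)'' and use this to run a non-crossing pullback argument inside the critical gap. But the critical leaf $\ell_0=\ol{\frac\vartheta 2\,\frac{\vartheta+1}2}$ is a \emph{diameter} of the disk, and the (conjugate of the) critical gap $G^*$ contains the origin; hence $\ell_0$ necessarily crosses the interior of $G^*$. In other words, the degree-two first-return map $\si_2^{\circ k}$ on $G^*$ \emph{does} have a critical chord sitting inside it, namely $\ell_0$ itself. With a critical chord present, a degree-two map on a gap admits perfectly good nontrivial invariant laminations, including periodic leaves that are not edges of $G^*$; so your conclusion ``the only way to keep all these preimages pairwise non-crossing is for $\ell$ to be an edge of $G_A$'' does not follow. (There is also a sign/conjugation issue in identifying where $\ell_0$ lands on $J_c$, which you flag yourself, but that is secondary to the structural error.)

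This is exactly why the paper does not proceed as you do. Instead it builds the induced lamination $\Lc_G$ on $G^*$ via a monotone collapse $\xi:\uc\to\uc$ semiconjugating $\si_2^{\circ k}|_{\d G^*}$ with $\si_2$, observes that $\xi(\ell_0)$ is again a critical chord, and then splits into two cases. In Case~1 ($\xi(\ell_0)=\ol{0\,\frac 12}$) one does extract a periodic leaf of $\Lc_\vartheta(\Omega_c)$ that is a closed curve and invokes Lemma~\ref{l:no-per} --- this is the contradiction your argument is aiming for. But in Case~2 (no endpoint of $\xi(\ell_0)$ is periodic) the lamination $\Lc_G$ is a clean critical-leaf lamination with \emph{no infinite gaps}; then the quotient of $\d G^*$, and hence $\pi(\d A)$, is a dendrite, which is impossible since the complement of $\pi(A)$ contains other Fatou components of the mating. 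Your proposal has no mechanism to produce this second contradiction, and indeed it cannot, because in Case~2 there really are periodic leaves of $\Lc_\vartheta$ inside $G^*$ that are not edges of $G^*$.
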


It follows that the Fatou component $\pi(A)$ of the mating $p_c\sqcup\Lc_\vartheta$
has a Jordan curve boundary.

\begin{proof}
 Suppose that the restriction of $\pi$ to $\d A$ is not one-to-one.

 {\em Step 1: the critical gap.}
 Let $G$ be the critical gap of the lamination $\Lc$ that models $p_c$,
 i.e. the gap of $\Lc$ containing the origin.
 Then $G$ is periodic of period $k$ and symmetric with respect to the origin.
 Note that $\gamma_c(\d G)=\d A$.
 There is an edge $M$ of $G$ such that $\si_2^{\circ k}[M]=M$.
 Moreover, the {\em hole} of $G$ behind $M$, i.e. the component
 of $\uc-G$ bounded by the endpoints of $M$ and disjoint from $G$,
 is the longest among all holes of $G$
 (the opposite leaf $-M$ is also on the boundary of $G$; its hole has
 the same length; all other holes have strictly smaller length).
 These facts are among the basic properties of {\em minor leaf laminations};
 they are discussed in \cite{Thurston}.
 The leaf $M$ is called the {\em major leaf} of $G$ (and of $\Lc$), or
 simply the {\em major}.

 {\em Step 2: a new invariant lamination.}
 Set $G^*$ to be the set obtained from $G$ by complex conjugation
 (if we want to place both $G$ and $\Lc_\vartheta$ to the same disk,
 then we must take complex conjugation of something --- either of $G$
 or of $\Lc_\vartheta$).
 Let $M^*$ denote the geodesic chord obtained from the major $M$ of $G$ by
 complex conjugation.
 Our assumption that the restriction of $\pi$ to $\d A$ is not
 one-to-one translates as follows: there is a leaf or a finite gap of $\Lc_\vartheta$
 such that two different points of $\d G^*$ are among its vertices.
 There is a natural monotone map $\xi:\uc\to\uc$ that collapses the
 closures of all holes of $G^*$ and that semi-conjugates the map
 $\si_2^{\circ k}:\d G^*\to\d G^*$ (i.e. the map $\si_2^{\circ k}$ restricted
 to the basis of $G^*$ and extended over all edges of $G^*$ in a monotone
 continuous way) with the map $\si_2:\uc\to\uc$.
 For every leaf $\ell=\ol{ab}$, we set $\xi(\ell)=\ol{\xi(a)\xi(b)}$,
 thus the $\xi$-images of leaves are well defined.
 Consider the set of $\xi$-images of all leaves of $\Lc_\vartheta$.
 Denote this set by $\Lc_G$.
 It is not hard to verify that the collection of leaves thus obtained
 is a quadratic invariant lamination.
 It follows from our assumption that the lamination $\Lc_G$ is non-trivial,
 i.e. it contains non-degenerate leaves.

 {\em Step 3: the critical leaf.}
 No leaf of $\Lc_G$ can intersect the $\xi$-image of the critical geodesic chord
 $\ell_0=\ol{\frac\vartheta 2\frac{\vartheta+1}2}$ in $\disk$.
 Note that the geodesic chord $\xi(\ell_0)$ is also a critical chord (i.e. a
 diameter of the unit circle), whose endpoints get eventually mapped to
 the fixed point $\ol 0$, which is the $\xi$-image of $M^*$.
 It follows that $\Lc_G$ is the critical leaf lamination
 generated by the critical leaf $\xi(\ell_0)$.
 Indeed, the chord $\xi(\ell_0)$ must eventually map to a geodesic
 chord containing $\ol 0$.
 However, if an invariant lamination contains any leaf having $\ol 0$
 as an endpoint, then it must contain the leaf $\ol{0\frac 12}$.\footnote{
 Among all leaves $\ol{0a}$ choose the one, for which $\ol a$ is the
 closest to $\ol 0$. Then the leaf $\ol{0(2a)}$ will intersect the leaf
 $\ol{\frac 12(a+\frac 12)}$ unless $a=1/2$.}
 We now need to consider two cases: either the leaf $\xi(\ell_0)$
 coincides with $\ol{0\frac 12}$ or no endpoint of this leaf is periodic
 under the angle doubling map.

 {\em Case 1}: the leaf $\xi(\ell_0)$ coincides with $\ol{0\frac 12}$.
 In this case, the critical leaf lamination generated by $\ol{0\frac 12}$
 is not clean, and all leaves of this lamination are pullbacks of the
 critical leaf.
 It follows that every leaf of $\Lc_G$ connects two points that both
 get eventually mapped to $\ol 0$, but not simultaneously.
 In particular, there is a leaf $\ell$ of $\Lc_\vartheta$, whose $\xi$-image
 coincides with $\ol{0\frac 12}$.
 An endpoint of $\ell$ that projects to $\ol 0$ under $\xi$ must
 lie in the closure of the complementary arc to $G^*$ bounded by $M^*$.
 At the same time, this endpoint belongs to $G^*$ by our assumption.
 Therefore, it coincides with an endpoint of $M^*$.
 Hence the leaf $\ell$ shares endpoints with $M^*$ and with
 $-M^*$ (the centrally symmetric to $M^*$ leaf with respect to the origin).
 Thus there is a quadrilateral such that two edges of it are $M^*$ and $-M^*$,
 and the other two edges $\ell$ and $-\ell$ are leaves of $\Lc_\vartheta$.
 Both $\ell$ and $-\ell$ map to $\si_2(\ell)$, and both $M^*$ and $-M^*$
 map to $\si_2(M^*)$.
 Hence $\si_2(\ell)=\si_2(M^*)$.
 The image of $\ell$ in $\Lc_\vartheta(\Omega_c)$ is a closed curve, since
 both endpoints of $M$ map to the same point under $\gamma_c$.
 Moreover, this closed curve is a periodic leaf of $\Lc_\vartheta(\Omega_c)$,
 since the endpoints of $M$ are periodic.
 This contradicts Lemma \ref{l:no-per}.

 {\em Case 2:} no endpoint of $\xi(\ell_0)$ is periodic
 under the angle doubling map.
 In this case, the lamination $\Lc_G$ is clean or becomes clean after
 removal of the critical leaf and all its pullbacks.
 It has no infinite gaps, i.e. the entire disk $\ol\disk$ is the union
 of leaves and finite gaps of $\Lc_G$.
 It follows that the quotient of $\uc$ by the equivalence relation generated
 by $\Lc_G$ is a {\em dendrite} (i.e. is locally connected and homeomorphic
 to the complement to an open dense topological disk in $\CP^1$).
 It follows that $\pi(\d A)$ must also be a dendrite.
 This is a contradiction, because the complement to $\pi(A)$ contains
 some other Fatou components of the mating $p_c\sqcup\Lc_\vartheta$.
\end{proof}

\section{Regluing and topological captures}
In this section, we recall the basic properties of regluing,
a topological surgery on rational functions introduced in \cite{T09}.
We will also relate regluing to topological models for captures.

\subsection{Topological models for captures}
\label{ss:topcap}
We first describe topological models for captures given in \cite{Rees_description}.
Let $p_c$ be a quadratic polynomial such that $p_c^{\circ k}(0)=0$, and $\beta$ a
capture path for $p_c$ of external angle $\vartheta$.
A topological model for the conformal capture of $p_c$ corresponding to the
capture path $\beta$ is perhaps easier to describe in terms of regluing of the
mating $p_c\sqcup\Lc_\vartheta$.
However, we need to know in advance that the corresponding mating space is
homeomorphic to the sphere.
We will prove this statement later using the topological models from
\cite{Rees_description}.

Let $\Lc$ be the quadratic invariant lamination that models $p_c$,
and $G$ the infinite gap of $\Lc$ that corresponds to the Fatou component
of $p_c$ containing the point $\beta(1)$.
The map $\si_2:\uc\to\uc$ extends to a Thurston map $s_c$ such that
$s_c(\ell)=\si_2[\ell]$ for every leaf $\ell\in\Lc$.
Moreover, we can set $s_c(z)=z^2$ outside the unit disk,
and arrange that $0$ be a critical point of $s_c$ such that $s_c^{\circ k}(0)=0$.
The map $s_c$ is combinatorially equivalent to $p_c$.
Indeed, the process of collapsing leaves and finite gaps of $\Lc$ can
be performed continuously, so that there is a homotopy between $s_c$ and $p_c$
consisting of Thurston maps.
Moreover, the size of the postcritical set does not change during this homotopy.

We define the point $w\in G$ as the center of $G$, i.e. the unique
point in $G$ that gets eventually mapped to $0$ under $s_c$.
Let $\gamma$ be a simple path that intersects the unit circle exactly once
at the point $\ol\vartheta=\gamma(1/2)$ and such that $\gamma(0)=\infty$, $\gamma(1)=w$.
The Thurston maps $\sigma_\gamma\circ s_c$ and $\sigma_\beta\circ p_c$ are
also combinatorially equivalent.
We can assume that the path homeomorphism $\sigma_\gamma$ maps some
narrow tube $T$ around the curve $\gamma[0,1/2]$ inside the gap $G$.
Consider $T_1=p_c^{-1}(T)$.
This is a strip (a ``tunnel'') connecting two gaps of $\Lc$ outside the unit disk.
Note that the image of $T_1$ under the map $\si_\gamma\circ s_c$ is in the
unit disk, hence is disjoint from $T_1$.
Taking pullbacks of $T_1$ under the iterates of $\si_\gamma\circ s_c$,
we obtain several disjoint tunnels in $\CP^1-\disk$.
It is easy to see that the tunnels are arranged in the same way as the
pullbacks of the critical leaves in the lamination $\Lc_\vartheta^{-1}$,
i.e. the tunnels can be realized as slightly fattened leaves
(we take only finitely many leaves at a time and use that the
tunnels are narrow enough).

We can now formalize the picture with the tunnels.
The mating $p_c\sqcup\Lc_\vartheta$ is modeled by the
union $\Lc\cup\Lc_\vartheta^{-1}$ in the sense that, to obtain the
mating space, we collapse all leaves and finite gaps in this union.
We now modify the ``two-sided lamination'' $\Lc\cup\Lc_\vartheta^{-1}$
in the following way.
The critical leaf $\ell_0^{-1}$ of $\Lc_\vartheta^{-1}$ gets ``fattened'',
i.e. gets transformed into a quadrilateral, whose sides are two geodesic
chords of $\CP^1-\ol\disk$ and two circle arcs (this quadrilateral serves
to model the tunnel $T_1$).
To this end, we need to blow up the endpoints $\ol{-\frac\vartheta 2}$ and
$\ol{-\frac{\vartheta+1}2}$ of $\ell_0^{-1}$ to circle arcs.
We do the same operation with all the pullbacks of $\ell_0$.
As a result, we obtain a geodesic lamination $\Lc_{(\infty)}^{-1}$ in
the complement to the unit disk.
The gaps of $\Lc_{(\infty)}^{-1}$ are {\em ideal quadrilaterals}, whose two sides are
geodesic chords and two other sides are circle arcs, or finite geodesic polygons.
Every leaf of $\Lc_\vartheta^{-1}$ that is not a pullback of $\ell_0$ gives rise to
a leaf of $\Lc_{(\infty)}^{-1}$.
It follows that every finite gap of $\Lc_\vartheta^{-1}$ gives rise to a finite gap
of $\Lc_{(\infty)}^{-1}$.

The process of blowing up certain points of the unit circle into arcs
can be formalized in the following way.
There exists a two-fold orientation-preserving covering $s:\uc\to\uc$
and a monotone continuous projection $\xi:\uc\to\uc$ with the following properties:
\begin{itemize}
 \item
the points $\ol a$ that have non-trivial fibers $\xi^{-1}(\ol a)$
are exactly those with $\si_2^{\circ m}(\ol a)=\ol\vartheta$ for some $m>0$;
\item
the projection $\xi$ semi-conjugates $s$ with $\si_2$, i.e.
$\xi\circ s=\si_2\circ\xi$.
\end{itemize}
Let $K$ be the Cantor set obtained as the closure of the complement in
$\uc$ to all non-trivial fibers of $\xi$.
Then $K$ is invariant under $s$.

With every leaf $\ol{ab}$ of $\Lc_\vartheta$, we associate one or two geodesic chords
of $\CP^1-\ol\disk$.
If the fibers of $\xi$ over $a$ and $b$ are singletons $\{a'\}$ and $\{b'\}$,
respectively, then we associate the chord $\ol{a'b'}\,^{-1}$ with $\ol{ab}$.
Otherwise, we associate two {\em disjoint} chords $\ol{a'b'}\,^{-1}$ and
$\ol{a''b''}\,^{-1}$
with $\ol{ab}$, where $a'$, $a''$ are the endpoints of $\xi^{-1}(a)$,
and $b''$, $b'$ are the endpoints of $\xi^{-1}(b)$.
We can now define a geodesic lamination $\Lc^{-1}_{(\infty)}$ in $\CP^1-\ol\disk$
as the set of all geodesic chords associated with leaves of $\Lc_\vartheta$ in the
way just described.
The lamination $\Lc_{(\infty)}^{-1}$ is $s$-invariant in the sense
of Subsection \ref{ss:inv-lam}.

We can also modify the lamination $\Lc$ so that to make it into an
$s$-invariant lamination $\Lc_{(0)}$.
The lamination $\Lc_{(0)}$ is uniquely defined by the following properties:
\begin{itemize}
 \item
for every leaf $\ol{ab}$ of $\Lc_{(0)}$, the geodesic chord
$\ol{\xi(a)\xi(b)}$ is a leaf of $\Lc$;
\item
let $G^1$, $G^2$ be two gaps of $\Lc$, whose bases map to $G'$ under $\si_2$;
the geodesic convex hull of $\xi^{-1}(G^i\cap\uc)$, $i=1,2$, is a gap of $\Lc_{(0)}$.
\end{itemize}
In other words, as we blow up a point of the unit circle into an arc,
this arc gets inserted to the boundary of an infinite gap of $\Lc$.

Consider the union of $\Lc_{(0)}$ and $\Lc_{(\infty)}$.
It defines an equivalence relation $\approx$ on $\CP^1$: namely, the minimal
equivalence relation containing the equivalence relation generated by $\Lc_{(0)}$
and the equivalence relation generated by $\Lc_{(\infty)}$.
The map $s$ descends to a self-map $g$ of the subset $K/\approx\subset\CP^1/\approx$.
There is a continuous extension of $g$ to all components of the
complement to $K/\approx$ in $\CP^1/\approx$ such that $g$ is a Thurston map.
Components of the complement to $K/\approx$ in $\CP^1/\approx$ will be
referred to as {\em Fatou components} of $g$.
We can arrange that every Fatou component be mapped to a periodic Fatou
component, and that the periodic Fatou components be super-attracting domains.

\begin{thm}
\label{t:topcap}
 The map $g$ defined above is topologically conjugate to the conformal capture
 of $p_c$ corresponding to the capture path $\beta$.
 In particular, the topological space $\CP^1/\approx$ is homeomorphic to
 the sphere.
\end{thm}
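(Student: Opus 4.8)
The plan is to prove Theorem \ref{t:topcap} by identifying the combinatorial data defining $g$ with the combinatorial data defining the formal capture $\sigma_\gamma \circ s_c$, and then invoking Thurston's rigidity (as guaranteed by the Rees--Tan Lei criterion, which holds by hypothesis). First I would establish that $\CP^1/\approx$ is homeomorphic to the sphere. The equivalence relation $\approx$ is generated by the two $s$-invariant laminations $\Lc_{(0)}$ and $\Lc_{(\infty)}$; these are obtained from $\Lc$ and $\Lc_\vartheta^{-1}$ by the blow-up operation along the monotone map $\xi$, which only inserts arcs into boundaries of infinite gaps and fattens the critical-leaf pullbacks into ideal quadrilaterals. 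Since blowing up points into arcs and collapsing them back are inverse monotone operations, the quotient $\CP^1/\approx$ is homeomorphic to the mating space of $p_c \sqcup \Lc_\vartheta$, i.e. to $\CP^1/\!\approx_{c,\beta}$. Thus the sphere statement reduces to showing that \emph{this} mating space is a sphere, which in turn follows from Moore's theorem once we verify that all $\approx$-classes are non-separating; here Lemma \ref{l:no-per} does the essential work, since the only way Moore's hypothesis can fail is if some periodic leaf becomes a closed curve, and the hypothesis of the theorem (the image of the central gap does not separate) rules this out.

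Next I would verify that $g$ is a Thurston map combinatorially equivalent to $\sigma_\gamma \circ s_c$. The construction already exhibits $g$ as a branched covering extending the circle map $s$ over all Fatou components, with the critical orbit structure matching that of a capture: the critical point coming from $\infty$ lands on the center $w$ of $G$, which is eventually mapped to $0$, and $0$ itself is periodic of period $k$. The tunnel analysis in Subsection \ref{ss:topcap} shows that the pullbacks $T_1, T_2, \dots$ of the initial tunnel under $\sigma_\gamma \circ s_c$ are arranged exactly as the pullbacks of the critical leaf in $\Lc_\vartheta^{-1}$; fattening these leaves into the ideal quadrilaterals of $\Lc_{(\infty)}^{-1}$ is precisely the combinatorial record of the path homeomorphism $\sigma_\gamma$ acting on $s_c$. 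Therefore the marked branched covering $g$ and the formal capture $\sigma_\gamma \circ s_c$ have the same post-critical set and the same action up to a homeomorphism isotopic to the identity rel the post-critical set, so they are combinatorially equivalent. Combining this with the established combinatorial equivalences $s_c \sim p_c$ and $\sigma_\gamma \circ s_c \sim \sigma_\beta \circ p_c$, we conclude that $g$ is combinatorially equivalent to the formal capture $\sigma_\beta \circ p_c$.

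Finally I would invoke rigidity. By hypothesis the central gap's image does not separate, so by the Rees--Tan Lei theorem the formal capture $\sigma_\beta \circ p_c$ is combinatorially equivalent to a rational function, namely the conformal capture. Since $g$ is a hyperbolic Thurston map (every critical point is eventually mapped to a cycle containing the critical point $0$) that is combinatorially equivalent to $\sigma_\beta \circ p_c$, Thurston's rigidity theorem guarantees that $g$ and the conformal capture are topologically conjugate; a hyperbolic combinatorial class contains at most one rational map, so the conjugacy is forced. This simultaneously re-establishes that $\CP^1/\approx$ carries a branched covering conjugate to a genuine rational map, hence is a sphere.

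The main obstacle I anticipate is the \emph{sphericity} of the quotient, i.e. verifying Moore's hypothesis that no $\approx$-class separates the sphere. The laminations $\Lc_{(0)}$ and $\Lc_{(\infty)}$ are each individually unlinked and clean, but their union generates a relation whose classes can a priori be complicated: a leaf of one lamination may chain together with leaves or gaps of the other, and one must rule out the formation of a closed loop of identified leaves (which would pinch off a nontrivial region). This is exactly where Lemma \ref{l:no-per} is indispensable, and the care required is in translating the non-separation hypothesis on the central gap into the statement that no periodic leaf of $\Lc_\vartheta(\Omega_c)$ closes up, and then checking that the finitely-many non-periodic chainings cannot produce a separating class either. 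Once sphericity is in hand, the remainder is a relatively routine bookkeeping of critical orbits followed by an application of the quoted rigidity and characterization theorems.
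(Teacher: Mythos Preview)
Your proposal has a genuine gap in the final step, and the order of deductions differs from the paper in a way that matters.

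The crucial error is the appeal to Thurston rigidity to upgrade combinatorial equivalence to topological conjugacy. Thurston's rigidity theorem says only that a combinatorial equivalence class of hyperbolic Thurston maps contains at most one \emph{rational function}; it does \emph{not} say that every Thurston map in that class is topologically conjugate to that rational function. Combinatorial equivalence is strictly weaker than topological conjugacy, and there is no general mechanism by which the former yields the latter. The paper does not argue this way: it works with the extended Thurston map $s$ on $\CP^1$ (before passing to the quotient), shows $s$ is combinatorially equivalent to $\sigma_\beta\circ p_c$, and then invokes a theorem of Rees (from \cite{Rees_description}) asserting that a Thurston map combinatorially equivalent to a hyperbolic rational function is \emph{semi-conjugate} to it. The remaining work is to analyze the fibers of this semi-conjugacy and prove they are precisely the $\approx$-classes (leaves and finite gaps of $\Lc_{(0)}\cup\Lc_{(\infty)}^{-1}$). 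Only then does the semi-conjugacy descend to a genuine conjugacy between $g$ on $\CP^1/\approx$ and the conformal capture. Sphericity of $\CP^1/\approx$ is a \emph{consequence} of this conjugacy, not an input to it.

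Your attempt to establish sphericity independently also contains an error: the quotient $\CP^1/\approx$ is \emph{not} homeomorphic to the mating space of $p_c\sqcup\Lc_\vartheta$. The ideal quadrilaterals of $\Lc_{(\infty)}^{-1}$ are infinite gaps (two of their sides are circle arcs), so they are not collapsed by $\approx$; they survive as Fatou components of $g$. The mating space is obtained from $\CP^1/\approx$ by a \emph{further} collapse of these quadrilaterals, which is exactly how the paper derives Corollary~\ref{c:mating-sphere} from Theorem~\ref{t:topcap}, not the reverse. Even granting a direct Moore-theorem verification, you would still need to replace the rigidity appeal with the semi-conjugacy plus fiber analysis to reach the conclusion.
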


The proof of this theorem is organized as follows.
We can extend the map $s$ to a Thurston map such that $s(\ell)=\si_2[\ell]$
for every $\ell\in\Lc_{(0)}$ and $s(\ell^{-1})=\si_2[\ell^{-1}]$
for every $\ell^{-1}\in\Lc_{(\infty)}^{-1}$.
Then it can be shown that $s$ is combinatorially equivalent to $\si_\gamma\circ s_c$,
hence also to $\si_\beta\circ p_c$.
A rough geometric reason for that is the picture with the tunnels discussed above.
Next, we use that a Thurston map combinatorially equivalent to a hyperbolic rational
function is semi-conjugate to this function.
This general theorem proved in \cite{Rees_description} is in fact a
version of Thurston's rigidity principle.
Thus there is a semi-conjugacy between $s$ and a conformal capture.
Finally, the fibers of the semi-conjugacy can be studied, and it can be
proved that the fibers are precisely leaves and finite gaps of the two-sided
lamination $\Lc_{(0)}\cup\Lc_{(\infty)}^{-1}$.

Theorem \ref{t:topcap} has the following immediate corollary.

\begin{cor}
\label{c:mating-sphere}
  The mating space of $p_c\sqcup\Lc_\vartheta$ is homeomorphic to the sphere.
\end{cor}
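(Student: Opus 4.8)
The plan is to exhibit the mating space of $p_c\sqcup\Lc_\vartheta$ as a monotone quotient of the sphere $\CP^1/\approx$ supplied by Theorem \ref{t:topcap}, and then to invoke Moore's theorem to conclude that this quotient is again a sphere. Recall that the mating space of $p_c\sqcup\Lc_\vartheta$ is $\CP^1/\approx_{c,\beta}$, the quotient obtained by collapsing every leaf and every finite gap of the two-sided lamination $\Lc\cup\Lc_\vartheta^{-1}$; on the other hand, Theorem \ref{t:topcap} identifies $\CP^1/\approx$, the quotient by the blown-up two-sided lamination $\Lc_{(0)}\cup\Lc_{(\infty)}^{-1}$, with the sphere. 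Since $\Lc_{(0)}\cup\Lc_{(\infty)}^{-1}$ is obtained from $\Lc\cup\Lc_\vartheta^{-1}$ precisely by blowing up the countably many points $\ol a$ with $\si_2^{\circ m}(\ol a)=\ol\vartheta$ into circle arcs, the two laminations differ only by this insertion of arcs, and it is this difference that must be accounted for.

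First I would build the un-blow-up map. The monotone projection $\xi:\uc\to\uc$ of Subsection \ref{ss:topcap}, which collapses each non-trivial fiber (a circle arc) back to the corresponding point $\ol a$, extends to a monotone continuous surjection $\Xi:\CP^1\to\CP^1$: on the closed disk it carries the configuration $\Lc_{(0)}$ onto $\Lc$ (each inserted arc being reabsorbed into the boundary of the infinite gap to which it was attached), and on the closed complementary disk it carries $\Lc_{(\infty)}^{-1}$ onto $\Lc_\vartheta^{-1}$ (each fattened ideal quadrilateral collapsing onto the critical chord $\ell_0^{-1}$ or one of its pullbacks). By construction $\Xi$ maps every leaf and every finite gap of $\Lc_{(0)}\cup\Lc_{(\infty)}^{-1}$ into a single leaf or finite gap of $\Lc\cup\Lc_\vartheta^{-1}$, so $\Xi$ sends $\approx$-classes into $\approx_{c,\beta}$-classes and descends to a continuous surjection $\bar\Xi:\CP^1/\approx\to\CP^1/\approx_{c,\beta}$.

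Then I would analyze the fibers of $\bar\Xi$. Over a point of $\CP^1/\approx_{c,\beta}$ whose representative is not one of the blown-up angles the fiber is a single point, while over a blown-up angle the fiber is the image in $\CP^1/\approx$ of a closed arc of $\uc$; since the interior of such an arc consists of points lying freely on the boundary of a Fatou component of $g$, which are collapsed by no identification, this fiber is itself an arc. Thus $\bar\Xi$ is a monotone map of the sphere $\CP^1/\approx$ whose non-degenerate fibers form a countable family of arcs, each of which is a cellular, non-separating continuum. By the decomposition form of Moore's theorem \cite{Moore}, the quotient of $S^2$ by such an upper semicontinuous decomposition is again homeomorphic to $S^2$; hence $\CP^1/\approx_{c,\beta}$, which is exactly this quotient, is a sphere.

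The step I expect to be the main obstacle is the fiber analysis: one must check that each non-degenerate fiber of $\bar\Xi$ really is an arc rather than a more complicated continuum, that is, that the endpoints of a blown-up arc are not $\approx$-identified with interior points or with endpoints of other arcs in a way that would create a loop or a separating set, and that the resulting decomposition of $\CP^1/\approx$ is upper semicontinuous. This amounts to a careful bookkeeping of the identifications in $\Lc_{(0)}\cup\Lc_{(\infty)}^{-1}$ along the blown-up arcs, after which the application of Moore's theorem is routine.
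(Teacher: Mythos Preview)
Your overall strategy---realize the mating space as a monotone quotient of the sphere $\CP^1/\approx$ supplied by Theorem~\ref{t:topcap} and then invoke Moore---is exactly the paper's approach. The paper's proof is one sentence: the mating space is obtained from $\CP^1/\approx$ by collapsing the images of all ideal quadrilaterals, and Moore's theorem applies.

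However, your fiber analysis is wrong, and the error is precisely in the step you flagged as the main obstacle. You claim the non-degenerate fibers of $\bar\Xi$ are arcs (images in $\CP^1/\approx$ of the inserted circle arcs), but this overlooks the interiors of the ideal quadrilaterals. Each ideal quadrilateral in $\Lc_{(\infty)}^{-1}$ is an \emph{infinite gap}, not a leaf or finite gap, so its interior is not collapsed by $\approx$; only its two geodesic-chord sides (which are leaves of $\Lc_{(\infty)}^{-1}$) collapse to points. Hence the image of a closed ideal quadrilateral in $\CP^1/\approx$ is a topological closed disk. Now, under your map $\Xi$ this whole quadrilateral is sent onto a single pullback of $\ell_0^{-1}$---you say so yourself---and in the mating space that entire leaf is one point of $\CP^1/\approx_{c,\beta}$. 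Therefore the fiber of $\bar\Xi$ over that point is the full disk, not a single cross-sectional arc. You appear to have computed the fibers of $\Xi$ (which are indeed arcs) rather than the fibers of $\bar\Xi$ (which must absorb an entire $\approx_{c,\beta}$-class downstairs).

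This mistake does not ultimately break the argument, since closed disks are still cellular non-separating continua and Moore applies just as well; but the correct bookkeeping is exactly what the paper compresses into the phrase ``collapsing the images of all ideal quadrilaterals,'' and your write-up should identify the fibers that way.
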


\begin{proof}
  The mating space is obtained from the space $\CP^1/\approx$ by
  collapsing the images of all ideal quadrilaterals.
  Hence the result follows from the theorem of Moore \cite{Moore} that characterizes
  topological quotients of the sphere that are homeomorphic to the sphere.
\end{proof}

\subsection{Regluing}
\label{ss:top-reg}
Consider a countable set $\Zc$ of disjoint simple curves in the sphere $S^2$.
Recall that $\Zc$ is said to be a {\em null-set} if, for every $\eps>0$,
there exist only finitely many curves from $\Zc$, whose diameter
is bigger than $\eps$.
To measure diameter, we can use any metric compatible with the topology of the sphere.
It is easy to see that the notion of a null-set
does not depend on the choice of a metric.
In fact, the notion of a null-set can be stated in purely topological terms.
Namely, $\Zc$ is a null-set if, for every open covering $\Uc$ of $S^2$,
there exist only finitely many curves from $\Zc$ that are not entirely covered
by an element of $\Uc$.

{\em Regluing data} on $\Zc$ are the choice of an equivalence relation
on each curve $Z\in\Zc$ such that there exists a homeomorphism $h:Z\to [-1,1]$
that transforms this equivalence relation into the equivalence relation
on $[-1,1]$, whose classes are of the form $\{\pm x\}$, $x\in [0,1]$.
To define regluing, we need a null-set of disjoint simple curves $\Zc$ and
a choice of regluing data on them.
We first cut along the curves in $\Zc$, and then reglue these curves in a different way.
To cut along finitely many curves $Z_1$, $\dots$, $Z_n$ means to
consider the Caratheodory compactification of the set $U_n=S^2-\bigcup_{i=1}^n Z_i$,
i.e. the union of the set $U_n$ and the set of all prime ends in $U_n$, equipped with
a suitable topology.
This is a formalization of an intuitively obvious process: as we cut a surface with
boundary along a curve disjoint from the boundary, we obtain a new piece of
the boundary, which is a simple closed curve.
Thus cutting along finitely many disjoint simple curves in the sphere
leads to a compact surface with boundary.
In fact the definition works even in the case, where the curves are not disjoint.

We need to be careful when defining a sphere with countably many cuts.
Suppose that $\Zc$ consists of curves $Z_1$, $\dots$, $Z_n$, $\dots$.
Let $Y_n$ be the result of cutting along the curves $Z_1$, $\dots$, $Z_n$,
i.e. the Caratheodory compactification of $U_n=S^2-\bigcup_{i=1}^n Z_i$.
The natural inclusion $\iota_n:U_{n+1}\to U_n$ gives rise to the continuous map
$\iota_{n*}:Y_{n+1}\to Y_n$ (which is not an inclusion).
Let $Y$ be the inverse limit of the topological spaces $Y_n$ and the continuous
maps $\iota_{n*}$.
The space $Y$ is called {\em the sphere with cuts} (made along the set $\Zc$
of curves).
We will sometimes use the notation $S^2\ominus\Zc$ for $Y$.
In fact, we never used in the definition of $Y$ that curves are disjoint
and that they form a null-sequence.
Thus $Y$ is well defined even without these assumptions.
We will need these assumptions to glue the cuts.
We will also need the regluing data.

Every curve $Z\in\Zc$ gives rise to a simple closed curve $Z^\ominus$ obtained by
cutting along $Z$.
Recall that the regluing data contains an equivalence relation on
$Z$ such that there is a homeomorphism between $Z$ and $[-1,1]$ mapping
every equivalence class onto $\{\pm x\}$ for some $x\in [-1,1]$.
There are two marked points in $Z^\ominus$ that project to the endpoints of $Z$.
We can now define an equivalence relation on $Z^\ominus$ as follows:
two points of $Z^\ominus$ are equivalent if their projections in $Z$ are
equivalent, and they are not separated by the marked points of $Z^\ominus$.
Thus the sphere with cuts $Y$ comes equipped with equivalence relations on
all the cuts.
These equivalence relations extend trivially to an equivalence relation
on the entire space $Y$.
The quotient $Y^*$ of $Y$ by this equivalence relation is called the
{\em regluing} of the sphere $S^2$ along the null-set of disjoint curves
$\Zc$, equipped with regluing data.
We will sometimes use the notation $S^2\#\Zc$ for this regluing.

It can be shown using Moore's characterization \cite{Moore_foundations}
of a topological sphere that
the topological space $Y^*$ is homeomorphic to the sphere (see \cite{T09}).
Thus we reglued a topological sphere and obtained another topological sphere.
This operation becomes useful, however, when we have a geometric structure
on the sphere.
Then the regluing may produce a different geometric structure.

A continuous map $f:S^2\to S^2$ acting on the sphere can be thought of as
a geometric structure.
Geometrically, we can think that the sphere is equipped with arrows connecting
every point $z\in S^2$ with the point $f(z)$.
We now assume that $f$ is an orientation preserving topological branched covering,
and see what happens with arrows when we reglue.
Cutting along a curve creates problems as we cut through the tips of some arrows.
These arrows get doubled, and we obtain two different arrows originating
at the same point.
To rectify this issue, we also need to cut along the pullbacks of the
curve, i.e. along components of its full preimage under the map $f$.
Hence, if we cut along a curve, we also need to cut along all pullbacks.

Suppose that $f:S^2\to S^2$ is a topological branched covering,
and $\alpha_{0}:[0,1]\to S^2$ a simple path such that $\alpha_{0}(0)$ is
a critical value of multiplicity one, and there are no other critical
values in $\alpha_{0}[0,1]$.
Then there is a simple path $\alpha_1:[-1,1]\to S^2$ such that
$f\circ\alpha_1(t)=\alpha_{0}(t^2)$.
The point $\alpha_1(0)$ must be a critical point of $f$.
The path $\alpha_1$ defines the curve $Z_1=\alpha_1[-1,1]$ together with
an equivalence relation on $Z_1$ identifying $\alpha_1(t)$ with $\alpha_1(-t)$
(in other words, two points in $Z_1$ are equivalent if they are mapped to
the same point under $f$).
Let $\Zc$ be the set of all pullbacks of $Z_1$.
We will write $\Zc=[\alpha_0]$.

Suppose that every element $Z\in\Zc$ gets mapped to $Z_1$ one-to-one
under a suitable iterate of $f$.
Suppose also that $\Zc$ is a null-set consisting of disjoint curves.
Under these assumptions, the space $Y^*$ defined above makes sense,
and there is a natural map $F:Y^*\to Y^*$ called the {\em regluing of $f$}.
We started with a topological dynamical system on the sphere, performed
a regluing, and obtained another topological dynamical system on the sphere.
We say that the dynamical system $F:Y^*\to Y^*$ is obtained from
$f:S^2\to S^2$ by regluing the pullbacks of the path $\alpha_0$.
As we frequently think of the map $f:S^2\to S^2$ as a geometric structure
on the sphere, we write $(S^2,f)$ to indicate that $S^2$ is equipped with
this structure, and we sometimes write $(S^2,f)\#\Zc$ or
$(S^2,f)\#[\alpha_0]$ for $(Y^*,F)$,
the space $Y^*$ equipped with the map $F$.

\subsection{Captures vs. regluing}
Let $p_c$ be a quadratic polynomial such that $p_c^{\circ k}(0)=0$,
and $\beta$ a capture path for $p_c$ of external angle $\vartheta$.
We assume that the Thurston map $\sigma_\beta\circ p_c$ is combinatorially
equivalent to a rational function $H$, equivalently, the central gap
of $\Lc_\vartheta(\Omega_c)$ does not disconnect the sphere.
We know by Corollary \ref{c:mating-sphere} that the mating space of
$p_c\sqcup\Lc_\vartheta$ is homeomorphic to the sphere.
Recall that the {\em Julia set} of the mating $p_c\sqcup\Lc_\vartheta$
is defined as the projection of $J_c$ under the quotient map $\pi$ collapsing
all leaves and finite gaps of $\Lc_\vartheta(\Omega_c)$.
The complement to the Julia set is called the {\em Fatou set}.
Connected components of the Fatou set are called {\em Fatou components}.
We will now describe a topological model for the rational function $H$
in terms of the mating $p_c\sqcup\Lc_\vartheta$ and regluing.

Let $V$ be the interior component of $K_c$ containing the point $v=\beta(1)$.
The restriction of $\pi$ to $V$ is a homeomorphism.
Moreover, $\pi(V)$ is a Fatou component of the mating $p_c\sqcup\Lc_\vartheta$
containing the critical value $\pi(b)$ on its boundary, where $b=\beta(1/2)$
as above.
Let $\alpha_0:[0,1]\to\ol{\pi(V)}$ be any simple path such that
$\alpha_0(0)=\pi(b)$ (the critical value of the mating),
$\alpha_0(1)=\pi(v)$ (the {\em center} of the Fatou component $\pi(V)$),
and $\alpha_0(0,1)\subset\pi(V)$.
E.g. we can define $\alpha_0$ as a suitably reparameterized restriction
of $\pi\circ\beta$ to $[1/2,1]$.
Consider the corresponding topological dynamical system $F:Y^*\to Y^*$
obtained by regluing the pullbacks of $\alpha_0$ (up to topological conjugacy,
it does not depend on the choice of $\alpha_0$, provided that $\alpha_0$
satisfies the requirements listed above).

\begin{thm}
  The conformal capture $H$ is topologically conjugate to the regluing $F$ of the
  mating $p_c\sqcup\Lc_\vartheta$.
\end{thm}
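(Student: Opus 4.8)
The plan is to reduce the theorem to Theorem \ref{t:topcap}. That theorem already identifies the map $g$ on $\CP^1/\approx$ with the conformal capture $H$, so it suffices to produce a topological conjugacy between the regluing $F$ of the mating $p_c\sqcup\Lc_\vartheta$ and the map $g$; composing with the conjugacy of Theorem \ref{t:topcap} then gives $F\cong H$.

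First I would locate the regluing curves. Since $\ell_0$ is the critical leaf of $\Lc_\vartheta$, its image $\ell_0^{-1}$ in $\Lc_\vartheta^{-1}$ is a critical leaf of the two-sided lamination $\Lc\cup\Lc_\vartheta^{-1}$; in the mating it collapses to a single point $p^*$, which is the critical point of the mating map with critical value $\alpha_0(0)=\pi(b)$. Hence the first pullback $\alpha_1$ passes through $p^*$, and $Z_1=\alpha_1[-1,1]$ is the core of the tunnel $T_1$ of Subsection \ref{ss:topcap}. More generally, the null-set $\Zc=[\alpha_0]$ of all pullbacks of $Z_1$ consists of the tunnel cores, i.e.\ of curves through the collapsed images of $\ell_0^{-1}$ and of all its $\si_2$-pullbacks. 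I would check the standing hypotheses of the regluing construction --- that $\Zc$ is a null-set of pairwise disjoint simple curves, each mapped one-to-one onto $Z_1$ by a suitable iterate --- by transporting the corresponding properties of the disjoint null-family of critical-leaf pullbacks of $\Lc_\vartheta^{-1}$, which the tunnels merely fatten.

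Next I would argue that regluing along $\Zc$ realizes exactly the passage from the two-sided lamination $\Lc\cup\Lc_\vartheta^{-1}$, whose quotient is the mating, to the two-sided lamination $\Lc_{(0)}\cup\Lc_{(\infty)}^{-1}$, whose quotient is $\CP^1/\approx$. The regluing data on $Z_1$ identify $\alpha_1(t)$ with $\alpha_1(-t)$, i.e.\ the two points folded together by the mating map across the critical leaf; cutting along $Z_1$ and regluing in the transverse direction separates these two sides and reinserts them as the two geodesic edges of the ideal quadrilateral obtained by fattening $\ell_0^{-1}$. Thus the local effect of the regluing at each curve of $\Zc$ is precisely the blow-up of a collapsed critical-leaf pullback into the corresponding ideal quadrilateral of $\Lc_{(\infty)}^{-1}$; simultaneously, blowing up the endpoints of these leaves inserts the new circle arcs into the adjacent infinite gaps, reproducing the modification of $\Lc$ into $\Lc_{(0)}$. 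This identifies the reglued sphere $Y^*$ with $\CP^1/\approx$, and under this identification $F$, being induced by the same branched covering off the cuts, coincides with $g$.

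The main obstacle will be upgrading this local blow-up picture to a global, dynamically compatible statement. One must verify that the equivalence relation imposed on each cut circle $Z^\ominus$ by the regluing data agrees, leaf by leaf and gap by gap, with the identifications built into the projection $\CP^1\to\CP^1/\approx$; that the resulting correspondence assembles into a single homeomorphism $Y^*\to\CP^1/\approx$ rather than merely a collection of local ones; and that this homeomorphism intertwines $F$ with $g$ over the infinite gaps as well as over the Julia-set part $K/\approx$. Because $\Zc$ is a null-set, Moore's theorem guarantees that both $Y^*$ and $\CP^1/\approx$ are spheres, so the only genuinely delicate step is the continuous extension of the conjugacy across the countably many cut loci together with the check that it preserves the branched-covering structure there. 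Once $F\cong g$ is in hand, Theorem \ref{t:topcap} yields $F\cong H$, as desired.
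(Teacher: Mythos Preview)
Your proposal is correct and takes essentially the same approach as the paper: reduce to $F\cong g$ via Theorem \ref{t:topcap}, then identify the two models by observing that regluing along $[\alpha_0]$ performs exactly the point-doubling (equivalently, the blow-up of critical-leaf pullbacks into ideal quadrilaterals) that distinguishes $\CP^1/\approx$ from the mating space. The paper organizes this comparison by first matching $g$ and $F$ on the Julia-set part via the natural semi-conjugacy $h:K/\approx\to(\text{Julia set of the mating})$, whose non-trivial fibers are precisely the doubled points, and then extending over the Fatou components---this is your lamination/quadrilateral picture viewed from the Julia set.
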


The proof is straightforward and can be easily performed by a detailed
comparison of the two topological models.
We only give a sketch here.

\begin{proof}[Sketch of a proof]
  First we replace the capture $H$ with its topological model $g$ described
  in Subsection \ref{ss:topcap}.
  Note that the dynamics of $g$ on the set $K/\approx$ is semi-conjugate
  to the dynamics of the mating $p_c\sqcup\Lc_\vartheta$ on its Julia set.
  Let $h$ denote this semi-conjugacy.
  The fiber $h^{-1}(x)$ of $h$ over any point $x$ in the Julia set of the mating
  is either a singleton or a pair of points.
  Namely, $h^{-1}(x)$ is a pair of points precisely if $x$ gets mapped
  to the critical value $\pi(b)$ of the mating under some strictly positive
  iterate of the mating map.
  Thus the topological model for $g:K/\approx\to K/\approx$ is easy to describe
  in terms of the action of the mating map on its Julia set by doubling certain
  points.
  Note that the regluing does exactly the same thing with the dynamics on
  the Julia set.
  Thus $g:K/\approx\to K/\approx$ is topologically conjugate to the
  restriction of the regluing to the image of the Julia set under regluing.
  It remains to extend the topological conjugacy over all
  components of $\CP^1/\approx-K/\approx$, which is straightforward.
\end{proof}

Thus the map $F$ obtained from $p_c\sqcup\Lc_\vartheta$ by regluing
is a topological model for the capture.
Note that many different paths give rise to the same capture.
Hence we obtain many topological models for the same capture.
It is not at all obvious that these models are topologically conjugate
unless we use that they are all conjugate to the capture.
We would be interested to know a direct proof of the topological conjugacy
between the models.

\subsection{Reversed regluing}
The operation of regluing is reversible.
Consider a null-set $\Zc$ of simple disjoint curves in $S^2$, equipped with
regluing data.
Regluing of this set yields a topological space $Y^*=S^2\#\Zc$ homeomorphic
to the sphere.
Actually, it yields more than that.
We also obtain a null-set $\Zc^\#$ of simple disjoint curves
in the space $Y^*$ equipped with regluing data.
Namely, we define $\Zc^\#$ as the set of images of all cuts under the natural
projection from $Y$ to $Y^*$.
Thus the curves in $\Zc^\#$ are in one-to-one correspondence with the curves
in $\Zc$.
Given any $Z\in\Zc$, we first cut it to obtain a simple closed curve $Z^\ominus$
and then glue it back in a different way to obtain the corresponding curve
$Z^\#\in\Zc^\#$.
There are natural projections from $Z^\ominus$ to $Z$ and to $Z^\#$.
We can now define an equivalence relation on $Z^\#$ in terms of these projections.
Namely, two points of $Z^\#$ are equivalent if they are projections of
points, whose images in $Z$ coincide.
These equivalence relations define regluing data on the set of curves $\Zc^\#$.
In particular, we can consider the regluing $(S^2\#\Zc)\#\Zc^\#$.
This topological space is {\em canonically} homeomorphic to $S^2$.

Suppose now that the sphere $S^2$ is equipped with a topological
branched covering $f:S^2\to S^2$, and $\Zc=[\alpha_0]$, where a
simple path $\alpha_0$ is as in Subsection \ref{ss:top-reg}.
Then, after regluing, we have $\Zc^\#=[\alpha_0^\#]$, where
the path $\alpha^\#_0$ is obtained as the image of the path
$\alpha_0$ in the space $S^2\#\Zc$ (the multivalued correspondence
between points of $S^2$ and points of $S^2\#\Zc$ is in fact single
valued on $\alpha_0[0,1]$ since this set is disjoint from all the cuts).
We have
$$
(S^2,f)=((S^2,f)\#[\alpha_0])\#[\alpha^\#_0].
$$
This means that, to recover $(S^2,f)$ from the regluing $(S^2,f)\#\Zc$,
we only need to reglue the set of curves $[\alpha_0^\#]$.

\section{Captures and matings in parameter slices}
We now consider some natural complex one-dimensional parameter spaces of
rational functions, and discuss parameter values that correspond to
captures and matings.
Recall now that any rational function $f:\CP^1\to\CP^1$ of degree
at least two defines the splitting of the Riemann sphere into the
Fatou set and the Julia set.
However, these sets are not defined in the same way as for polynomials, because
infinity is no better than any other point of the sphere, when a non-polynomial
rational function acts.
By definition, the Fatou set of $f$ consists of all points, at
which the map $f$ is Lyapunov stable, equivalently, the sequence $f^{\circ n}$
is equicontinuous.
The Julia set is by definition the complement to the Fatou set.

\subsection{The slices $Per_k(0)$}
Consider the space $Rat_2$ of conformal conjugacy classes of
all quadratic rational functions with marked critical points.
This space is complex two-dimensional: a quadratic rational function is
determined by five coefficients, and the group $\Aut(\CP^1)$
of conformal automorphisms of the Riemann sphere has complex dimension three.
Thus the dimension of the space $Rat_2$ equals the number of critical
points a quadratic rational function has, and this is not a mere coincidence.
It is a general observation in holomorphic dynamics that the behavior of
critical orbits is to a large extent responsible for the dynamical behavior
of the whole map.
There are a number of theorems to this effect saying roughly that if
all critical orbits behave nicely, then the function itself is nice
from the dynamical viewpoint.
For example, if all critical orbits get attracted by attracting cycles, then
the function is {\em hyperbolic}, i.e. 1) there exists a neighborhood
of the Julia set and a Riemannian metric on this neighborhood such that
the function is strictly expanding with respect to this metric, 2)
there exists a Riemannian metric on the Fatou set, in which the function
is strictly contracting.

To simplify the model (and to make nice pictures) people consider
complex one-dimensional slices of $Rat_2$.
To define the slices, one fixes a particular nice behavior of one critical
point, so there remains only one ``free'' critical point.
E.g. one can impose that one critical point is periodic of period $k$.
We will denote the corresponding slice by $Per_k(0)$, following J. Milnor
\cite{Milnor-QuadRat} (0 in the notation stands for the multiplier of
a $k$-periodic point: having a periodic point of multiplier 0 is the same
as having a periodic critical point).
More precisely, the space $Per_k(0)$ is defined as the set of all conformal
conjugacy classes of rational functions $f$ with marked critical points $c_1$,
$c_2$ such that $f^{\circ k}(c_1)=c_1$, and $k$ is minimal with this property.
Clearly, each $Per_k(0)$ is an algebraic curve in the algebraic surface $Rat_2$.
For $k=1$, 2, 3 and 4, the genus of this curve is equal to zero, i.e.
there is a rational parameterization.

If $k=1$, then one critical point must be fixed.
By a conformal coordinate change, we can map this point to infinity.
A rational function, for which the infinity is a fixed critical point,
is necessarily a quadratic polynomial.
By an affine change of variables, the coefficient with $z$ can be killed,
so that every quadratic polynomial reduces to the form $z^2+c$.
Thus $Per_1(0)$ can be identified with the standard quadratic family $\{z^2+c\}$.

Consider the case $k=2$.
Any conjugacy class from $Per_2(0)$ that does not contain the map $z\mapsto 1/z^2$
has a unique representative of the form $z\mapsto a/(z^2+2z)$.
Thus $Per_2(0)$ can be identified with the $a$-plane punctured at $0$,
to which we need to add a single point at infinity corresponding to the class of
the map $1/z^2$.
Rational parameterizations for $Per_3(0)$ and $Per_4(0)$ are also easy to obtain.

\subsection{Hyperbolic components}
The set of elements in $Per_k(0)$ representing hyperbolic functions is open.
Connected components of this set are called {\em hyperbolic components}.
J. Milnor \cite{Milnor-QuadRat} gave a classification of hyperbolic components
in $Per_k(0)$ into four types: A, B, C and D.
Hyperbolic elements in $Per_k(0)$ of type A are classes of rational functions
such that both critical points $c_2$ and $c_1$ are in the same
super-attracting domain (i.e. $c_2$ lies in the Fatou component containing $c_1$).
It can be proved that there are no type $A$ components for $k>1$,
and the space $Per_1(0)$ has just one type A component that is identified with
the complement to the Mandelbrot set.
Hyperbolic elements in $Per_k(0)$ of type B are classes of rational functions
such that the free critical point $c_2$ lies in a periodic Fatou component,
whose cycle contains $c_1$ but which itself does not contain $c_1$.
Every slice $Per_k(0)$ contains a finite number of type B hyperbolic components
(this number is nonzero unless $k=1$).
Hyperbolic elements of $Per_k(0)$ of type C are classes of rational functions
such that the free critical point $c_2$ lies in a strictly preperiodic Fatou
component that gets eventually mapped to the component containing $c_1$.
For all $k>1$, the slices $Per_k(0)$ contain infinitely many hyperbolic type C
components.
Finally, hyperbolic elements of $Per_k(0)$ of type D are classes of rational
functions such that $c_1$ and $c_2$ lie in disjoint periodic cycles of Fatou
components.
All slices $Per_k(0)$ contain infinitely many hyperbolic type D components.

It follows from \cite{McMullen}
that every hyperbolic component in $Per_k(0)$
of type B, C or D has a unique {\em center}, i.e. a critically finite conjugacy class.
A conformal mating of two hyperbolic critically finite quadratic polynomials 
represents the center of some type D hyperbolic component in $Per_k(0)$.
Similarly, a conformal capture of a hyperbolic critically finite quadratic polynomial
represents the center of some type C hyperbolic component.
However, the converse is not true in general.
It is true for $Per_2(0)$ but, in the slice $Per_3(0)$,
there are type D components, whose centers are not matings, and there are
type C components, whose centers are not captures.
Examples are given in \cite[Appendix F by J. Milnor and Tan Lei]{Milnor-QuadRat},
\cite{ReesV3}, respectively.
We say that a hyperbolic component of type C is a {\em capture component}
if its center is a conformal capture.

\subsection{Regluing and type C boundaries}
In \cite{T09}, topological models were given for classes in $Per_k(0)$
that lie on the boundaries of type C hyperbolic components.
These models were defined in terms of regluing.
We now cite the result:

\begin{thm}
\label{t:regl}
 Suppose that the class of a rational function $f$ belongs to
 the boundary of a type C hyperbolic component $\Hc$ in $Per_k(0)$
 but does not belong to the boundary of a type B component.
 Then $f$ is topologically conjugate to a map obtained from the
 center of $\Hc$ by regluing.
\end{thm}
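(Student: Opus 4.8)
The plan is to realize $f$ as a regluing of the center $g$ of $\Hc$ along the pullbacks of a path determined by the position of $f$ on $\d\Hc$, and then to match the resulting topological model against the honest conformal dynamics of $f$. By the cited theorem of McMullen, $\Hc$ contains a unique critically finite class $g$; since $\Hc$ is of type C, the free critical point $c_2$ of $g$ sits at the center of a strictly preperiodic Fatou component, and its image, the free critical value $g(c_2)$, is the center of a Fatou component $U'$ whose forward orbit reaches the super-attracting cycle of $c_1$. A boundary class $f\in\d\Hc$ that avoids the boundary of a type B component is detected by the free critical value leaving the interior of (the component corresponding to) $U'$ and landing at a well-defined point $p\in\d U'$, the excluded type B locus being exactly where this landing point is shared with the periodic Fatou cycle. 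I would encode $f$ by the internal angle of $p$ and take $\alpha_0$ to be the internal ray of $U'$ running from $g(c_2)$ out to $p$; note that when $p$ is specialized to the landing point coming from the defining mating this recovers the reversed version of the mating-to-capture regluing of Subsection \ref{ss:top-reg}.

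Next I would set up the surgery. The path $\alpha_0$ starts at the simple critical value $g(c_2)$ and meets no other critical value, so it admits a folded lift $\alpha_1$ through $c_2$ as in Subsection \ref{ss:top-reg}, and I set $\Zc=[\alpha_0]$. The conditions needed to form $(S^2,g)\#[\alpha_0]$ --- that the pullbacks of $\alpha_1$ are pairwise disjoint simple arcs, each carried one-to-one onto $\alpha_1$ by an iterate of $g$, and that they form a null-set --- follow from hyperbolicity of the critically finite $g$: internal rays living in distinct (pre)images of $U'$ cannot cross, and expansion on $J_g$ forces the diameters of the pullbacks to shrink to zero. This produces a topological branched covering $F=(S^2,g)\#[\alpha_0]$ whose Fatou--Julia partition is obtained from that of $g$ by pinching $U'$ and all of its iterated preimages along the reglued arcs; in particular the critical value of $F$ now lies on the image of the Julia set, which is the combinatorial signature of a type C boundary map.

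Finally I would establish a topological conjugacy between $F$ and $f$. I would argue by a direct comparison of models rather than by Thurston rigidity, since a boundary class need not be critically finite. On Fatou components the conjugacy is routine: B\"ottcher coordinates on the super-attracting cycle and the coordinates they induce on the preimage components match the combinatorics of $F$ and of $f$, so the whole problem reduces to the action on the Julia set. On the Julia set I would build the conjugacy from the Carath\'eodory/prime-end description of regluing as an inverse limit of cut spheres: the cut-and-reglue prescription identifies the prime ends of the pinched components of $F$ with the landing pattern of the rays of $f$ at the degenerate critical value and along its orbit, and the reversibility recorded in the reversed-regluing construction lets me spread this identification equivariantly over the whole sphere.

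The hard part will be this last step --- showing that the abstract regluing $F$ is realized, up to topological conjugacy, by the genuine rational map $f$. Two difficulties arise: \emph{(i)} pinning down the topological model of $f$ on $\d\Hc$, namely proving that the free critical value lands at a single boundary point and that the Fatou components pinch in exactly the pattern dictated by $[\alpha_0]$ and no further, which demands control of ray landing and geometric finiteness at the boundary; and \emph{(ii)} promoting the resulting combinatorial match to an actual homeomorphic conjugacy without the benefit of critical finiteness. I expect \emph{(ii)} to be the crux, and the natural route is to combine the prime-end formalism underlying regluing with a geometrically finite version of the rigidity principle in order to upgrade the model to a conjugacy.
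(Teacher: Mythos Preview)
The paper does not prove Theorem \ref{t:regl}; it is cited from \cite{T09}. However, the paragraph following the statement describes the structure of that proof, and it runs in the \emph{opposite} direction from your plan. In \cite{T09} the path $\alpha_0$ is drawn in the dynamical plane of $f$, not of the center $H$: one takes $\alpha_0$ inside the Fatou component $f(W)$ of $f$, joining the critical value $f(c_2)\in\partial f(W)$ to the center $f(z_W)$ of that component. Regluing $(\CP^1,f)$ along $[\alpha_0]$ produces a critically finite hyperbolic topological branched covering, and it is \emph{this} object that is identified with $H$. Only afterward is reversibility of regluing invoked to restate the conclusion as ``$f$ is a regluing of $H$ along $[\alpha_0^\#]$''.

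The choice of direction is exactly the issue you flagged as (ii). When you reglue $H$ and try to match the result against $f$, the target $f$ is not critically finite, so Thurston rigidity and the Rees semi-conjugacy principle are unavailable, and you are forced into the ad hoc prime-end matching you sketch, whose details you correctly identify as the crux and do not carry out. When instead one reglues $f$ and matches against $H$, the target is hyperbolic and critically finite, so the rigidity machinery (combinatorial equivalence together with the semi-conjugacy theorem of \cite{Rees_description}) does the work. Your step (i) --- local connectivity of the relevant Fatou boundaries of $f$ so that $\alpha_0$ exists and the pullbacks form a null-set of disjoint arcs --- is still required in the \cite{T09} approach and is handled there by the argument attributed to \cite{A-Y}; but step (ii) largely dissolves once the direction is reversed. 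As written, your proposal is a reasonable outline but leaves the decisive conjugacy step as an acknowledged gap, and that gap is avoidable by reorienting the argument.
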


We now make this statement more precise.
Suppose that a rational function $f$ represents a point in the parameter
slice $Per_k(0)$ lying on the boundary of $\Hc$ but not lying on the
boundary of a type B hyperbolic component.
Then it can be shown (\cite[Subsection 3.3]{T09}, which imitates an
argument from \cite{A-Y}) that the critical point $c_2$ of $f$
belongs to the boundary of some Fatou component $W$ that gets eventually mapped
to a Fatou component containing $c_1$.
The critical value $f(c_2)$ belongs to the boundary of $f(W)$.
There are two components of $f^{-1}(f(W))$, say, $W$ and $\tilde W$.
The boundary of each of the two components contains the critical point $c_2$.
It may also happen that there are more than two Fatou components,
whose boundary contains $c_2$.
Equivalently, there may be more than one Fatou component, whose boundary
contains $f(c_2)$.
However, the choice of the component $f(W)$ is determined by the
choice of a hyperbolic component $\Hc$, as is explained in
\cite[Subsection 3.3]{T09}.

Let $m$ be the minimal positive integer such that $f^{\circ m}(W)\ni c_1$.
The Fatou component $W$ has a unique center, i.e. a point $z_W$ such
that $f^{\circ m}(z_W)=c_1$.
We now consider a simple path $\alpha_0:[0,1]\to\ol{f(W)}$ such that
$\alpha_0(0)=f(c_2)$, $\alpha_0(1)=f(z_W)$, and $\alpha_0(0,1)\subset f(W)$.
Such a path exists because the boundary of $f(W)$ is locally connected
(see \cite{A-Y,T09}).
There is a unique simple path $\alpha_1:[-1,1]\to\CP^1$ such that
$f\circ\alpha_1=\alpha_0$.
Then $\alpha_1(0)$ must coincide with the critical point $c_2$.
We can now reglue the path $\alpha_1$ and all its pullbacks according to
the construction given above.
Let $H$ be a hyperbolic critically finite rational function representing
the center of $\Hc$.
Then $(\CP^1,H)$ is topologically conjugate to $(\CP^1,f)\#[\alpha_0]$.
Since regluing is reversible, we can also obtain the topological
dynamical system $(\CP^1,f)$ as a regluing $(\CP^1,H)\#[\alpha^\#_0]$.
Here the simple path $\alpha^\#_0:[0,1]\to\CP^1$ connects the
non-periodic critical value of $H$ to a boundary point of the Fatou component
containing it.
This statement is a more precise form of Theorem \ref{t:regl} cited above.

\subsection{Angles on the boundaries of capture components}
Let $\Hc$ be a capture component, i.e. a type C hyperbolic component in $Per_k(0)$,
whose center is a capture.
Every point of $\d\Hc$ that is not on the boundary of a type B component
is determined by its {\em angle}.
Let $f$ be a rational function representing this point of the parameter slice.
Then the critical value $f(c_2)$ of $f$ belongs to the boundary of
a Fatou component $f(W)$ that gets eventually mapped to the Fatou component
containing $c_1$.
Moreover, as was mentioned above, the choice of the Fatou component $f(W)$
is determined by the choice of the type C component $\Hc$, whose boundary contains
the class of $f$.

{\em Angles} of points on the boundary of $f(W)$ are defined similarly
to internal angles in polynomial case.
By the B\"ottcher theorem, there exists a bi-holomorphic map
$\psi:\disk\to f^{\circ m}(W)$ that conjugates the map
$z\mapsto z^2$ with the restriction
of the map $f^{\circ k}$ to $f^{\circ m}(W)$.
Since the boundary of $f^{\circ m}(W)$ is locally connected, there is
a continuous extension $\ol\psi:\ol\disk\to\ol{f^{\circ m}(W)}$.
The point of angle $\varkappa\in\R/\Z$ on the boundary of $f^{\circ m}(W)$
is by definition the point $\ol\psi(\ol\varkappa)$.
The point of angle $\varkappa\in\R/\Z$ on the boundary of $f(W)$ is
by definition the point $z\in\d f(W)$ such that $f^{\circ m-1}(z)$ is
the point of angle $\varkappa$ on the boundary of $f^{\circ m}(W)$.
Since the boundary of $f(W)$ maps one-to-one onto the boundary
of $f^{\circ m}(W)$ under $f^{\circ m-1}$, the point of angle $\varkappa$
on the boundary of $f(W)$ is well defined.

Recall that a set $A_\lambda\subset\CP^1$ depending on a parameter
$\lambda$ (taking values in a Riemann surface $\Lambda$)
{\em moves holomorphically} with $\lambda$
if there is a subset $A\subset\CP^1$ and a map
$(a,\lambda)\mapsto \iota_\lambda(a)$ (a {\em holomorphic motion})
from $A\times\Lambda$ to $\CP^1$ that is holomorphic with respect to $\lambda$
for every fixed $a\in A$, injective with respect to $a$
for every fixed $\lambda\in\Lambda$,
and such that $\iota_\lambda(A)=A_\lambda$ for every $\lambda\in\Lambda$.
A theorem, sometimes called the {\em $\lambda$-lemma},
of Ma\~ne, Sud and Sullivan \cite{MSS}
claims that if $A_\lambda$ moves holomorphically with $\lambda$, and
$\iota_{\lambda_0}$ is a quasi-symmetric embedding for some $\lambda_0\in\Lambda$,
then all $\iota_\lambda$ are quasi-symmetric embeddings; moreover,
the closure $\ol A_\lambda$ also moves holomorphically with $\lambda$.

Let $\lambda\in Per_k(0)$ be a parameter value, and $f_\lambda$ a
rational function representing $\lambda$.
Suppose that $f=f_{\lambda_0}$.
At least for the values of $\lambda$ that are close to $\lambda_0$,
we can choose representatives so that $f_\lambda$ depends holomorphically on $\lambda$.
There is a holomorphic motion that includes the sets $V_\lambda=f_\lambda(W_\lambda)$
such that $V_{\lambda_0}=f(W)$, and $V_\lambda$ is a Fatou component of $f_\lambda$
(see the proof of Proposition \ref{p:angle} that follows for more detail
on this holomorphic motion).
By the $\lambda$-lemma, the boundaries $\d V_\lambda$ also move holomorphically.
We can continue this holomorphic motion all the way up to the center $\lambda_1$
of the hyperbolic component $\Hc$.
As follows from the topological model for captures, the boundary of
$f_{\lambda_1}(W_{\lambda_1})$ is homeomorphic to the boundary of
a periodic Fatou component in the mating $p_c\sqcup\Lc_\vartheta$.
On the other hand, by the holomorphic motion argument, the boundary of
$f_{\lambda_1}(W_{\lambda_1})$ is homeomorphic to the boundary of $f(W)$.
Therefore, by Proposition \ref{p:A}, the boundary of $f(W)$ is a Jordan curve.
It follows that different angles cannot correspond to the same point on
the boundary of $f(W)$, i.e. every point on the boundary of $f(W)$ has a
well-defined angle.

We can now define the angle of the point in $\d\Hc$ represented by $f$
as the angle of the critical value $f(c_2)$ in $\d f(W)$ (in the sense just described).
If $\Bc$ denotes the union of all type B components, then the angle
is a function on $\d\Hc-\ol\Bc$ with values in $\R/\Z$.

\begin{prop}
\label{p:angle}
  The angle is an injective continuous function on $\d\Hc-\ol\Bc$.
\end{prop}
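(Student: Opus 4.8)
The plan is to establish the two assertions by separate arguments: continuity will follow directly from the equivariant holomorphic motion of the boundaries $\d V_\lambda$ already set up, while injectivity will be deduced from the fact that the angle is the boundary value of a conformal parametrization of $\Hc$ by the position of its free critical value.

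For continuity I would exploit that the holomorphic motion $\iota_\lambda\colon\d V_{\lambda_0}\to\d V_\lambda$, $V_\lambda=f_\lambda(W_\lambda)$, is \emph{dynamical}: it is built from the B\"ottcher coordinates of the super-attracting cycle of $c_1$, which persists throughout $\ol\Hc$, and hence it conjugates $f_{\lambda_0}^{\circ k}$ on $\d f_{\lambda_0}^{\circ m}(W)$ to $f_\lambda^{\circ k}$ on $\d f_\lambda^{\circ m}(W_\lambda)$. Since the only conformal automorphism of $\disk$ commuting with $z\mapsto z^2$ is the identity, the B\"ottcher coordinate is dynamically canonical, so $\iota_\lambda$ preserves the angle. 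Writing $\eta_\lambda\colon\R/\Z\to\d V_\lambda$ for the angle parametrization — a homeomorphism, because $\d V_\lambda$ is a Jordan curve by Proposition \ref{p:A} and the $\lambda$-lemma — equivariance gives $\eta_\lambda=\iota_\lambda\circ\eta_{\lambda_0}$, whence
$$
\theta(\lambda)=\eta_{\lambda_0}^{-1}\big(\iota_\lambda^{-1}(f_\lambda(c_2))\big).
$$
The right-hand side is a composition of the holomorphic map $\lambda\mapsto f_\lambda(c_2)$, the inverse motion $\iota_\lambda^{-1}$ (jointly continuous in $(w,\lambda)$ by the $\lambda$-lemma), and the fixed homeomorphism $\eta_{\lambda_0}^{-1}$; so $\theta$ is continuous near $\lambda_0$, and covering $\d\Hc-\ol\Bc$ by such neighbourhoods yields continuity everywhere.

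For injectivity I would pass to the interior of $\Hc$ and study the critical-value map $\Phi\colon\Hc\to\disk$, $\Phi(\lambda)=\psi_\lambda^{-1}\big(f_\lambda^{\circ m}(c_2)\big)$, where $\psi_\lambda\colon\disk\to f_\lambda^{\circ m}(W_\lambda)$ is the B\"ottcher coordinate of the periodic super-attracting component. This map is holomorphic and proper, since leaving $\Hc$ forces the free critical point onto the boundary of its Fatou component, so that $|\Phi(\lambda)|\to 1$. Its only zero is the center $\lambda_1$ of $\Hc$, the unique class for which $c_2$ is eventually mapped to $c_1$ (uniqueness of centers, \cite{McMullen}), and this zero is simple by the standard non-degeneracy of critically finite parameters. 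A proper holomorphic map onto $\disk$ with a single simple preimage of $0$ has degree one, so $\Phi$ is a conformal isomorphism $\Hc\to\disk$. By the very definition of the angle, $\theta(\lambda)$ is the B\"ottcher angle of $f_\lambda^{\circ m}(c_2)$ on $\d f_\lambda^{\circ m}(W_\lambda)$, and the same holomorphic-motion estimates show that $\Phi$ extends continuously to each class of $\d\Hc-\ol\Bc$ with boundary value $e^{2\pi i\theta(\lambda)}$. Thus the angle is exactly the boundary value of $\Phi$, and injectivity of $\theta$ on $\d\Hc-\ol\Bc$ is equivalent to the injectivity of this boundary extension there.

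The hard part will be precisely this last step: the boundary extension of a Riemann map is injective only where the boundary is free of self-contact, so I must rule out pinching of $\d\Hc$ at classes outside $\ol\Bc$. This is where the type C structure and the regluing picture enter. At a class of $\d\Hc-\ol\Bc$ the critical value lands on the boundary of a \emph{strictly preperiodic} Fatou component, so by Lemma \ref{l:str-per} the corresponding angle is non-periodic; the reversed-regluing description then reconstructs $f_\lambda$ from the center $H$ by regluing the single path $\alpha_0^\#$ ending at the boundary point of angle $\theta(\lambda)$, so the homotopy class of this path — hence the whole topological model — is determined by $\theta(\lambda)$. Combining this with the holomorphic motion (which forbids two distinct parameters of the one-dimensional slice from carrying the same dynamically marked boundary point at the same angle) should force two classes with a common angle to coincide, i.e. show that no pinching occurs off $\ol\Bc$. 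Making this dichotomy between the preperiodic capture locus and the periodic type B locus precise, and thereby controlling the boundary behaviour of $\Phi$, is the real obstacle; the conformal and holomorphic-motion ingredients above are otherwise routine.
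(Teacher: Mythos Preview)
Your continuity argument is essentially the paper's, only spelled out more explicitly: the paper simply observes that a boundary point of angle $\theta$ is a zero of the holomorphic function
\[
\lambda\ \longmapsto\ f_\lambda^{\circ m}(c_2^\lambda)-\ol\psi_\lambda(e^{2\pi i\theta}),
\]
and zeros of a holomorphic family move continuously.

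For injectivity, however, you take a detour that you do not manage to close, while the paper's route is short and direct. The paper does use the conformal isomorphism $\Phi:\Hc\to\disk$ (citing \cite{Rees_components} rather than reproving it), but it does \emph{not} try to extend $\Phi$ to the boundary and argue about pinching. Instead it argues as follows: for a fixed angle $\theta$, the set of parameters in $\d\Hc-\ol\Bc$ of angle $\theta$ is exactly the zero set of the non-constant holomorphic function displayed above, hence is \emph{discrete}. On the other hand, using Lemma~\ref{l:hol-mot} and \cite[Proposition~5]{T09}, every accumulation point of the parameter ray $\Rc_\Hc(\theta)$ lies in this discrete set; since the accumulation set of a ray is connected, it is a single point. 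Thus each parameter ray lands, and distinct boundary points are landing points of distinct rays --- injectivity follows immediately.

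The gap in your argument is the last paragraph. Even granting that the regluing model shows the \emph{topological} conjugacy class of $f_\lambda$ is determined by $\theta(\lambda)$, this does not show that the \emph{conformal} conjugacy class is, so two distinct parameters with the same angle are not yet excluded. Your parenthetical about the holomorphic motion ``forbidding two distinct parameters from carrying the same dynamically marked boundary point'' is exactly the right intuition, but it is precisely the isolated-zeros observation above --- and once you make it, the Riemann-map boundary-extension framing and the regluing input become unnecessary. In short: replace the pinching discussion by the one-line remark that the angle-$\theta$ locus is the zero set of a non-constant holomorphic function of $\lambda$, and then run the parameter-ray landing argument.
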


We need the following simple lemma (cf. also \cite{A-Y}):

\begin{lem}
\label{l:hol-mot}
  Suppose that $z_\lambda\in\CP^1$ is a point, and $R_\lambda\subset\CP^1$ is a
  set moving holomorphically with $\lambda\in\Lambda$.
  If $z_{\lambda_0}\in \ol R_{\lambda_0}$ for some $\lambda_0\in\Lambda$, then we
  either have $z_\lambda\in R_\lambda$ for values of $\lambda$ arbitrarily close
  to $\lambda_0$, or $z_\lambda\in\ol R_\lambda-R_\lambda$ for all $\lambda$
  sufficiently close to $\lambda_0$.
\end{lem}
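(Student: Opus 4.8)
The plan is to linearise the problem by transporting everything to the fibre over $\lambda_0$ by means of a holomorphic motion, and then to read off the position of $z_\lambda$ relative to $R_\lambda$ from a winding-number computation. First I would upgrade the given data: by the $\lambda$-lemma \cite{MSS} the closure $\overline{R_\lambda}$ moves holomorphically, and this motion extends (by S\l odkowski's extension theorem for holomorphic motions) to a holomorphic motion $\Phi_\lambda$ of the whole sphere, normalised so that $\Phi_{\lambda_0}=\mathrm{id}$. Thus $\Phi_\lambda$ is a continuous family of orientation-preserving homeomorphisms with $R_\lambda=\Phi_\lambda(R_{\lambda_0})$, $\overline{R_\lambda}=\Phi_\lambda(\overline{R_{\lambda_0}})$ and $\CP^1\setminus\overline{R_\lambda}=\Phi_\lambda(\CP^1\setminus\overline{R_{\lambda_0}})$; in particular $\Phi_\lambda$ carries the partition of the sphere into $R_{\lambda_0}$, its topological boundary, and its exterior onto the corresponding partition at parameter $\lambda$. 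Writing $a_0=z_{\lambda_0}$ and $w_\lambda=\Phi_\lambda^{-1}(z_\lambda)$, the three possibilities ``$z_\lambda\in R_\lambda$'', ``$z_\lambda\in\overline{R_\lambda}\setminus R_\lambda$'' and ``$z_\lambda\notin\overline{R_\lambda}$'' become membership of the single point $w_\lambda$ in the \emph{fixed} sets $R_{\lambda_0}$, $\partial R_{\lambda_0}$, or the exterior; moreover $w_{\lambda_0}=a_0\in\overline{R_{\lambda_0}}$ and $w_\lambda\to a_0$ as $\lambda\to\lambda_0$.

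The key comparison is between the holomorphic family $z_\lambda$ and the holomorphic \emph{boundary tracker} $q_\lambda:=\Phi_\lambda(a_0)$, which by construction stays in the same piece of the partition as $a_0$ for every $\lambda$ and satisfies $q_{\lambda_0}=a_0=z_{\lambda_0}$. If $z_\lambda$ and $q_\lambda$ agree along a sequence tending to $\lambda_0$, then by the identity theorem they coincide on the (connected) parameter domain, so $z_\lambda$ lies in the same piece as $a_0$ for all $\lambda$: this gives the ``for all $\lambda$ near $\lambda_0$'' alternative when $a_0\in\partial R_{\lambda_0}$, and the other alternative when $a_0\in R_{\lambda_0}$. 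Otherwise $\delta(\lambda):=z_\lambda-q_\lambda$ (in a local coordinate at $a_0$) has an isolated zero of some finite order $n\ge 1$ at $\lambda_0$, and I claim that then $z_\lambda\in R_\lambda$ for $\lambda$ arbitrarily close to $\lambda_0$, i.e. the first alternative holds.

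To prove the claim I would argue by contradiction: suppose $z_\lambda\notin R_\lambda$, hence $w_\lambda\in\CP^1\setminus R_{\lambda_0}$, for all $\lambda$ in some punctured neighbourhood of $\lambda_0$. On a small circle $|\lambda-\lambda_0|=r$ the loop $\lambda\mapsto w_\lambda$ avoids $a_0$ (since $w_\lambda=a_0$ forces $z_\lambda=q_\lambda$, i.e. $\delta(\lambda)=0$, which happens only at $\lambda_0$), and its winding number about $a_0$ equals that of $z_\lambda$ about the moving centre $q_\lambda$, namely $n\ge 1$: applying the continuous family of orientation-preserving homeomorphisms $\Phi_\lambda^{-1}$ simultaneously to the point $z_\lambda$ and to the centre $q_\lambda$ (note $\Phi_\lambda^{-1}(q_\lambda)=a_0$) cannot change this integer. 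On the other hand, in the intended setting $R_\lambda$ is a Fatou component with Jordan-curve boundary (this is exactly Proposition \ref{p:A}), so $\CP^1\setminus R_{\lambda_0}$ is a closed topological disc and $(\CP^1\setminus R_{\lambda_0})\setminus\{a_0\}$ is simply connected; a loop lying in it and avoiding $a_0$ must have winding number $0$ about $a_0$, contradicting $n\ge 1$. Hence $z_\lambda\in R_\lambda$ for some $\lambda$ on every sufficiently small circle, and letting $r\to 0$ yields the first alternative.

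\textbf{Main obstacle.} The delicate step is the winding-number identity, that the transported loop $\lambda\mapsto w_\lambda$ winds $n$ times about $a_0$. Here $\Phi_\lambda$ is holomorphic in $\lambda$ but only quasiconformal in the space variable, so $w_\lambda$ is \emph{not} holomorphic and one cannot simply invoke the argument principle; the equality of the two winding numbers must instead be extracted from the homotopy invariance of the degree together with the orientation-preservation and joint continuity of the family $\Phi_\lambda$, normalised by $\Phi_{\lambda_0}=\mathrm{id}$. A second point to keep in mind is that converting a nonzero winding number into membership in $R_\lambda$ uses the topological regularity of $R_\lambda$ (a Jordan domain), which is guaranteed in the present context by the holomorphic-motion argument preceding Proposition \ref{p:angle} together with Proposition \ref{p:A}. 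The interior case $a_0\in R_{\lambda_0}$ needs none of this machinery: there $\CP^1\setminus R_{\lambda_0}$ is closed and does not contain $a_0$, while $w_\lambda\to a_0$, so $z_\lambda\in R_\lambda$ for all $\lambda$ near $\lambda_0$ at once.
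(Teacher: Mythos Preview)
Your argument shares the paper's opening move --- compare $z_\lambda$ with the holomorphic ``tracker'' $q_\lambda=\ol\iota_\lambda(a_0)\in\ol R_\lambda$ and split on whether $z_\lambda-q_\lambda$ vanishes identically --- but then works much harder than necessary to finish. You extend the motion to the whole sphere via S\l odkowski, transport $z_\lambda$ back through the \emph{non-holomorphic} map $\Phi_\lambda^{-1}$, and are then forced to justify a winding-number identity for this non-holomorphic pullback and to invoke topological regularity of $R_{\lambda_0}$ to extract a contradiction.

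The paper sidesteps all of this with a short trick you are missing. Once $z_\lambda-q_\lambda$ has an isolated zero at $\lambda_0$, do not pull $z_\lambda$ back; instead perturb the base point $a_0\in\ol R$ to a nearby $r\in R$ and look at $\lambda\mapsto z_\lambda-\iota_\lambda(r)$. This function is \emph{genuinely holomorphic} in $\lambda$, and on the circle $|\lambda-\lambda_0|=\eps$ it is uniformly close to $z_\lambda-q_\lambda$, hence inherits its positive winding number about $0$. The ordinary argument principle then produces a zero in the disc, i.e.\ a parameter with $z_\lambda=\iota_\lambda(r)\in R_\lambda$. No S\l odkowski extension, no non-holomorphic transport, and --- crucially --- no hypothesis at all on the topology of $R_\lambda$.

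That last point matters because your identification of the ``intended setting'' is wrong: in the proof of Proposition~\ref{p:angle} the holomorphically moving set is the internal \emph{ray} $R_\lambda(\theta)$, not a Fatou component, so Proposition~\ref{p:A} is not the relevant input and your Jordan-domain contradiction does not apply as written. (It could likely be repaired for arcs --- the complement of an arc in $\CP^1$ minus a boundary point is still simply connected --- but the paper's perturbation argument proves the lemma for arbitrary holomorphically moving sets, with less effort.)
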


\begin{proof}
  Let $\iota:R\times\Lambda\to\CP^1$ be the holomorphic motion of $R_\lambda$ so that
  $R_\lambda=\iota_\lambda(R)$ for all $\lambda\in\Lambda$.
  We assume that $R\subset\CP^1$ and $\iota_{\lambda_0}$ is a homeomorphic
  embedding.
  By the $\lambda$-lemma, $\iota$ extends to a holomorphic motion
  $\ol\iota:\ol R\times \Lambda\to\CP^1$.
  Consider the holomorphic functions $z_\lambda$ and
  $w_\lambda=\ol\iota_\lambda(r_0)$ of $\lambda$,
  where $r_0\in\ol R$ is the point such that $\ol\iota_{\lambda_0}(r_0)=z_{\lambda_0}$.
  By definition, the holomorphic function $z_\lambda-w_\lambda$ of $\lambda$
  vanishes at $\lambda=\lambda_0$.
  As $\lambda$ goes around the circle $|\lambda-\lambda_0|=\eps$, the point
  $z_\lambda-w_\lambda$ makes at least one loop around $0$, unless
  $z_\lambda=w_\lambda$ identically for $\lambda$ in some neighborhood of $\lambda_0$.
  Therefore, for $r\in R$ very close to $r_0$, the point $z_\lambda-\iota_\lambda(r)$
  also makes at least one loop around 0.
  We conclude that the function $z_\lambda-\iota_\lambda(r)$ vanishes for some
  $\lambda$ in the disk $|\lambda-\lambda_0|<\eps$.
\end{proof}

\begin{proof}[Proof of Proposition \ref{p:angle}]
  Let $\lambda\in Per_k(0)$ be a class of rational functions, and $f_\lambda$
  a rational function representing $\lambda$.
  We can choose representatives $f_\lambda$, at least locally, so that
  that they depend holomorphically on $\lambda$.
  We let $A_\lambda$ denote the immediate basin of the periodic critical point
  $c_1^\lambda$ of $f_\lambda$, i.e. the Fatou component of $f_\lambda$
  containing $c_1^\lambda$.
  We know that $A_\lambda$ moves holomorphically with $\lambda$.
  More precisely, the bi-holomorphic isomorphism $\psi_\lambda:\disk\to A_\lambda$
  conjugating the map $z\mapsto z^2$ with the restriction of
  $f_\lambda^{\circ k}$ to $A_\lambda$ is a holomorphic motion.

  Define the {\em ray of angle $\theta$} in $A_\lambda$ as the set of all points
  $R_\lambda(t,\theta)=\psi_\lambda(e^{-t+2\pi i\theta})$, where $t$ runs through
  $(0,\infty)$ (this ray will sometimes be denoted by $R_\lambda(\theta)$).
  If $f_\lambda$ is hyperbolic, then every ray {\em lands}, i.e. there exists
  a limit of $R_\lambda(t,\theta)$ as $t\to 0$.
  By the $\lambda$-lemma, the closure $\ol{R_\lambda(\theta)}$
  of every ray moves holomorphically with $\lambda\in\Lambda=Per_k(0)-\ol\Bc$.
  We denote the corresponding holomorphic motion by $\ol\psi_\lambda$.
  It follows that the ray $R_\lambda(\theta)$ always lands at the point
  $\ol\psi_\lambda(\ol\theta)$.

  Suppose that, for every $\lambda\in\Hc$, the non-periodic critical point
  $c^\lambda_2$ belongs to some Fatou component $W_\lambda$ such that
  $f_\lambda^{\circ m}(W_\lambda)=A_\lambda$,
  and $m$ is the smallest positive integer with this property
  (clearly, the number $m$ does not depend on the choice of $\lambda$ in $\Hc$).
  Consider the map $\lambda\mapsto \psi_\lambda^{-1}(f_\lambda^{\circ m}(c^\lambda_2))$.
  This is a holomorphic map from $\Hc$ to $\disk$.
  It is proved in \cite{Rees_components} that this map is actually a
  bi-holomorphic isomorphism between $\Hc$ and $\disk$.
  Let $\Psi_{\Hc}$ denote the inverse of this map.

  Define the {\em parameter ray} $\Rc_{\Hc}(\theta)$ in $\Hc$ as the set of
  all points of the form $\Rc_{\Hc}(t,\theta)=\Psi_{\Hc}(e^{-t+2\pi i\theta})$,
  where $t\in (0,\infty)$.
  We will now prove that every point $\lambda_0\in\d\Hc-\ol{\Bc}$ is the landing
  point of exactly one parameter ray.
  Indeed, the non-periodic critical point $c_2^{\lambda_0}$ of $f_{\lambda_0}$ has a
  well-defined angle $\theta$, i.e.
  $f_{\lambda_0}^{\circ m}(c_2^{\lambda_0})\in\ol{R_{\lambda_0}(\theta)}$.
  Both the point $f_\lambda^{\circ m}(c_2^\lambda)$ and the set
  $\ol{R_\lambda(\theta)}$ move holomorphically with $\lambda$.
  By Lemma \ref{l:hol-mot}, the point $\lambda_0$ is an accumulation point
  of the parameter ray $\Rc_{\Hc}(\theta)$, i.e. a partial limit of
  $\Rc_{\Hc}(t,\theta)$ as $t\to 0$.
  A point $\lambda_0$ in $\d\Hc-\ol\Bc$ of angle $\theta$ is
  a zero of the holomorphic function
  $$
  \lambda\mapsto f_\lambda^{\circ m}(c_2^\lambda)-\ol\psi_\lambda(e^{2\pi i\theta}).
  $$
  Since this function is not constant, its zeros must be isolated.
  Therefore, there are no other points of angle $\theta$
  in a neighborhood of $\lambda_0$ in $\Lambda$.
  On the other hand, by \cite[Proposition 5]{T09}, all accumulation
  points of the parameter ray $\Rc_{\Hc}(\theta)$ must have angle $\theta$.
  Since the set of accumulation points is connected, the ray $\Rc_{\Hc}(\theta)$
  must land at $\lambda_0$.
  Since any parameter ray can only land at one point, we obtain
  that the angle is an injective function on $\d\Hc-\ol\Bc$.
  The continuity of this function follows from the fact that a zero
  of a holomorphic function depending continuously on parameters
  moves continuously with respect to parameters.
\end{proof}

\begin{cor}
  Suppose that $\Hc$ is a capture hyperbolic component, whose closure is
  disjoint from $\ol\Bc$.
  Then the angles establish a homeomorphism between $\d\Hc$ and $\R/\Z$.
\end{cor}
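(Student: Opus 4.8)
The plan is to deduce the Corollary from Proposition \ref{p:angle} by combining injectivity with compactness and surjectivity. The first observation is that the extra hypothesis $\ol\Hc\cap\ol\Bc=\0$ removes the only defect in Proposition \ref{p:angle}: it forces $\d\Hc\subset Per_k(0)-\ol\Bc=\Lambda$, so the angle is now a well-defined, continuous, injective function on the \emph{entire} boundary $\d\Hc$. Write $\alpha\colon\d\Hc\to\R/\Z$ for this function. Thus the remaining content of the statement is surjectivity of $\alpha$ together with the upgrade from continuous bijection to homeomorphism.

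Next I would establish surjectivity using the parameter rays already constructed in the proof of Proposition \ref{p:angle}. Fix any $\theta\in\R/\Z$ and consider the parameter ray $\Rc_\Hc(\theta)=\Psi_\Hc(e^{-t+2\pi i\theta})$ as $t\to 0$. Its accumulation set in $\d\Hc$ is the intersection of the nested compact connected sets $\ol{\Rc_\Hc((0,s],\theta)}$; hence it is non-empty (because $\ol\Hc$ is compact) and connected. By \cite[Proposition 5]{T09} every accumulation point of $\Rc_\Hc(\theta)$ has angle $\theta$, so this connected accumulation set is contained in the single angle-fiber $\alpha^{-1}(\theta)$. Since $\alpha$ is injective, that fiber is a single point $\lambda_\theta\in\d\Hc$, so the ray lands at $\lambda_\theta$ and $\alpha(\lambda_\theta)=\theta$. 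As $\theta$ was arbitrary, $\alpha$ is onto.

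Finally, $\alpha$ is a continuous bijection from $\d\Hc$ onto $\R/\Z$. The boundary $\d\Hc$ is a closed subset of the compact set $\ol\Hc$, hence compact, while $\R/\Z$ is Hausdorff; a continuous bijection from a compact space to a Hausdorff space is automatically a homeomorphism, which finishes the proof. (As a by-product, $\d\Hc$ is itself a Jordan curve, recovering a posteriori the circle topology of the boundary.) The main obstacle is the surjectivity step: the key point is that one never checks directly that each ray lands at a single point, but rather deduces it by playing the connectedness of the accumulation set against the already-proved injectivity of $\alpha$; the compactness of $\ol\Hc$ needed for non-emptiness of the accumulation set is guaranteed precisely because $\ol\Hc$ is disjoint from $\ol\Bc$ and stays within the region $\Lambda$ where all the relevant holomorphic motions are defined.
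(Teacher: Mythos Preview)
Your proof is correct and spells out precisely the deduction the paper leaves to the reader (the Corollary is stated there without proof, as an immediate consequence of Proposition \ref{p:angle} and the parameter-ray machinery in its proof). One minor quibble: in your last paragraph you attribute the compactness of $\ol\Hc$ to the hypothesis $\ol\Hc\cap\ol\Bc=\emptyset$, but that hypothesis only ensures that the angle function is defined on all of $\d\Hc$; the relative compactness of a type C component in $Per_k(0)$ (i.e.\ that its closure avoids the ideal boundary of the curve) is a separate standard fact, not a consequence of avoiding $\ol\Bc$.
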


\subsection{Identification of matings}
As above, let $\Hc$ be a capture hyperbolic component in $Per_k(0)$ with
center $\lambda_1$ (we will write $H=f_{\lambda_1}$ for the corresponding
capture), and $f=f_{\lambda_0}$ a map representing a point $\lambda_0$
on the boundary of $\Hc$.
Suppose that this point is not on the boundary of a type B
component, and suppose also that this point has angle $\varkappa$ in $\d\Hc$.
We know that the dynamical system $(\CP^1,f)$ is obtained from
the dynamical system $(\CP^1,H)$ by regluing the system $[\alpha_0^\#]$
of curves, where the simple path $\alpha_0^\#$ connects the non-periodic
critical value $H(c^{\lambda_1}_2)$ of $H$ lying in some Fatou
component $H(W_{\lambda_1})$ with the boundary point of this Fatou component
of angle $\varkappa$.

On the other hand, $(\CP^1,H)$ is obtained by regluing from the
topological mating $p_c\sqcup\Lc_\vartheta$ for various values of $\vartheta$.
The capture path determines both an external angle $\vartheta$ and
and the internal angle $\varkappa$ of the point $b=\beta(1/2)$.
We will say that $\beta$ is a {\em capture path of internal angle} $\varkappa$.
Note however, that, in contrast to external angle, knowing the
internal angle and the Fatou component of $p_c$ containing
the point $v=\beta(1)$ may not uniquely determine a capture path.
Suppose that the angle $\varkappa$ is equal to the
angle of the point $\lambda_0\in\d\Hc$.
Then the path that was used to reglue the mating into $(\CP^1,H)$
was a suitably reparameterized restriction $\beta:[1/2,1]\to K_c$.
The corresponding path in $(\CP^1,H)$ that appears after the regluing
is a simple path connecting the critical value $H(c^{\lambda_1}_2)$ with the
point on the boundary of $H(W_{\lambda_1})$ with angle $\varkappa$.
Therefore, the reverse regluing of $(\CP^1,H)$ into the mating
$p_c\sqcup\Lc_\vartheta$ is the same as the regluing of $(\CP^1,H)$
into $(\CP^1,f)$!
It follows that the results should also be the same, up to topological conjugacy.
We have thus proved the following

\begin{thm}
  \label{t:same-reg}
  Let $p_c$ be a quadratic polynomial such that the critical point 0 of $p_c$
  is periodic of minimal period $k$, and $\beta:[0,1]\to\CP^1$ a capture path for $p_c$
  of external angle $\vartheta$ and internal angle $\varkappa$.
  Suppose that $\sigma_\beta\circ p_c$ is combinatorially equivalent to
  a rational function $H$.
  If $\Hc$ is the hyperbolic component in $Per_k(0)$, whose center is
  represented by $H$, and a rational function $f$ represents a boundary
  point of $\Hc$ of angle $\varkappa$ not lying on the boundary of a type B
  hyperbolic component, then $f$ is topologically conjugate to the
  mating $p_c\sqcup\Lc_\vartheta$.
\end{thm}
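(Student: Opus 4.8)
The plan is to exhibit both the mating $p_c\sqcup\Lc_\vartheta$ and the rational map $f$ as regluings of a single dynamical system, the center $(\CP^1,H)$ of $\Hc$, along paths that agree up to the homotopy class that is relevant for regluing; the theorem then follows because regluing depends only on this class, up to topological conjugacy.

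First I would assemble the two regluing presentations of $(\CP^1,H)$ that are already available. The theorem identifying the conformal capture $H$ with a regluing of the mating gives $(\CP^1,H)\cong (p_c\sqcup\Lc_\vartheta)\#[\alpha_0]$, where $\alpha_0$ runs inside the Fatou component $\pi(V)$ from the critical value $\pi(b)$ to the center $\pi(v)$. Applying the reversibility of regluing, i.e. the identity $(S^2,f)=((S^2,f)\#[\alpha_0])\#[\alpha_0^\#]$, I recover the mating as $(\CP^1,H)\#[\alpha_0^\#]$, where $\alpha_0^\#$ is the image of $\alpha_0$ after regluing and is a simple path from the non-periodic critical value $H(c_2^{\lambda_1})$ to a boundary point of the Fatou component $H(W_{\lambda_1})$ containing that critical value. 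On the other hand, the precise form of Theorem \ref{t:regl} presents $f$ as $(\CP^1,H)\#[\widehat\alpha]$, where $\widehat\alpha$ joins the same critical value $H(c_2^{\lambda_1})$ to the boundary point of $\d H(W_{\lambda_1})$ of angle equal to the angle $\varkappa$ of $\lambda_0\in\d\Hc$. By Proposition \ref{p:A} together with the holomorphic-motion argument of the preceding subsection, $\d H(W_{\lambda_1})$ is a Jordan curve, so ``the boundary point of angle $\varkappa$'' is unambiguous.

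The crux is then to show that $\alpha_0^\#$ and $\widehat\alpha$ determine topologically conjugate regluings of $(\CP^1,H)$. Both are simple paths from the interior point $H(c_2^{\lambda_1})$ to a boundary point of the topological disk $H(W_{\lambda_1})$, with interiors in that disk. Any two such paths with the same landing point are isotopic rel endpoints through paths meeting $\d H(W_{\lambda_1})$ only at that endpoint, and such an isotopy carries the null-set of pullbacks of one path onto that of the other, yielding a topological conjugacy between the two regluings. Hence it suffices to prove that $\alpha_0^\#$ lands at the boundary point of angle $\varkappa$. This is exactly where the hypothesis is used: once the angle of $\lambda_0$ is assumed equal to the internal angle $\varkappa$ of $\beta$, the path $\alpha_0$ used to pass from the mating to $H$ is the suitably reparameterized restriction $\beta|_{[1/2,1]}$, so that its regluing image $\alpha_0^\#$ lands precisely at the point of $\d H(W_{\lambda_1})$ of angle $\varkappa$. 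The two landing points then coincide, the regluings are conjugate, and therefore $f\cong (\CP^1,H)\#[\widehat\alpha]\cong (\CP^1,H)\#[\alpha_0^\#]\cong p_c\sqcup\Lc_\vartheta$.

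I expect the main obstacle to be precisely this angle-matching step, namely verifying that the landing angle of $\alpha_0^\#$ on $\d H(W_{\lambda_1})$ is the internal angle $\varkappa$ of $b=\beta(1/2)$. This requires tracking $\varkappa$ through three successive identifications: first through the non-symmetric mating projection $\pi$, under which the internal angle of $b$ in $V$ becomes the internal angle of $\pi(b)$ in $\pi(V)$ because $\pi|_{\ol V}$ conjugates the respective B\"ottcher parametrizations; then through the regluing $\mathrm{mating}\to H$, under which the Fatou component $\pi(V)$ becomes $H(W_{\lambda_1})$ with the roles of center and critical value interchanged; and finally matching the resulting boundary angle with the angle of $\lambda_0$ supplied by the landing of the parameter ray $\Rc_{\Hc}(\varkappa)$ from Proposition \ref{p:angle}. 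Once these angle identifications are pinned down, the rest of the argument is formal.
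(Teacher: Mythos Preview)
Your proposal is correct and follows essentially the same approach as the paper: both exhibit the mating and $f$ as regluings of $(\CP^1,H)$ along paths from the non-periodic critical value to the boundary point of $H(W_{\lambda_1})$ of angle $\varkappa$, and conclude by noting that these regluings coincide. Your write-up is in fact somewhat more careful than the paper's about the isotopy of paths and the tracking of the internal angle through $\pi$ and the regluing, points the paper passes over quickly.
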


We can make this theorem more precise.

\begin{prop}
\label{p:lam-poly}
  The lamination $\Lc_\vartheta^c$ models some quadratic polynomial.
\end{prop}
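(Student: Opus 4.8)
The plan is to produce an explicit quadratic polynomial $p_{c'}$ whose cleaned lamination equals $\Lc_\vartheta^c$. By Lemma \ref{l:str-per} the angle $\vartheta$ is not periodic, and since an endpoint of the critical chord $\ell_0=\ol{\frac\vartheta 2\frac{\vartheta+1}2}$ is periodic only when $\vartheta$ itself is periodic, neither endpoint of $\ell_0$ is periodic. Thus $\Lc_\vartheta^c$ is one of the clean laminations with non-periodic critical endpoints described earlier (either carrying the critical leaf $\ell_0$, or, in the ``two finite gaps'' situation, carrying a finite preperiodic critical gap). In view of that discussion it is enough to exhibit a parameter $c'$ with $J_{c'}$ locally connected and $\gamma_{c'}(\vartheta)=c'$: then the critical value of $p_{c'}$ lies on $J_{c'}$ at external angle $\vartheta$, and $p_{c'}$ is modeled precisely by $\Lc_\vartheta^c$.

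The candidate $c'$ is the parameter associated with $\vartheta$ on $\d M$. First I would take a point $c'$ of the impression of the external parameter ray $\mathcal R_M(\vartheta)$; the correspondence between dynamical and parameter rays forces $\gamma_{c'}(\vartheta)=c'$ as soon as $J_{c'}$ is locally connected. When $\vartheta$ is preperiodic --- in particular in the finite-preperiodic-critical-gap case --- the Douady--Hubbard landing theorem does all the work: $\mathcal R_M(\vartheta)$ lands at a Misiurewicz parameter $c'$ whose Julia set is automatically locally connected, and the identification of $\Lc_\vartheta^c$ with the lamination of $p_{c'}$ is immediate.

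The main obstacle is the case of irrational $\vartheta$, where the parameter ray need not land a priori and one must secure a point $c'$ of its impression with locally connected Julia set --- a question in the vicinity of \emph{MLC}. I would control this using that $\vartheta$ arises as the external angle of a boundary point of a strictly preperiodic Fatou component of the critically finite polynomial $p_c$, and that the degenerate minor $\{\vartheta\}$ of $\Lc_\vartheta$ is a leaf of Thurston's quadratic minor lamination; the combinatorial rigidity of such minors should prevent $c'$ from being infinitely renormalizable, after which Yoccoz's theorem yields local connectivity of $J_{c'}$ and the landing of $\mathcal R_M(\vartheta)$. A cleaner, \emph{MLC}-free alternative, which I would ultimately prefer, is to realize $\Lc_\vartheta^c$ at the level of topological polynomials: Thurston's theory identifies every clean quadratic invariant lamination generated by a critical leaf whose minor lies in the quadratic minor lamination with the lamination of an actual quadratic polynomial, and applying this to $\Lc_\vartheta^c$ produces the desired $p_{c'}$ together with a topological conjugacy $F_{\Lc_\vartheta^c}\simeq p_{c'}$. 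In either route the delicate point --- and where I would concentrate the effort --- is verifying that no non-landing or wandering-gap pathology obstructs the realization for the irrational angles that actually occur.
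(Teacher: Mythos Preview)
Your overall strategy --- land the external parameter ray of angle $\vartheta$ at a parameter with locally connected Julia set, then invoke Yoccoz --- is exactly the paper's. Where you stop is precisely where the paper supplies a short concrete argument, so as written your proposal has a genuine gap at the point you yourself flag as ``the delicate point''. The paper does not argue for ``not infinitely renormalizable'' in the abstract; it proves directly that the critical leaf (or gap) of $\Lc_\vartheta^c$ is \emph{non-recurrent}, which is the hypothesis of the simple version of Yoccoz's theorem being cited. The mechanism uses exactly the input you identified, turned into a proof as follows. Let $\Lc$ be the lamination modeling $p_c$. The endpoints of the complex-conjugate chord $\ell_0^*$ lie in two strictly preperiodic infinite gaps $G^1$, $G^2$ of $\Lc$ (these correspond to the Fatou components of $p_c$ containing the two $p_c$-preimages of $b$), both of which map under $\si_2$ to a single strictly preperiodic gap $G$. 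If $\ell_0$ were recurrent, some point of the basis $G'=G\cap\uc$ would be recurrent; but the forward orbit of $G$ consists of finitely many gaps, two distinct gaps in this orbit can meet only along a common edge, and every edge of $G$ is strictly preperiodic by Lemma~\ref{l:str-per}. Hence there is no recurrence, and Yoccoz applies. The finite-critical-gap case is immediate since such a gap is already preperiodic; your Misiurewicz argument covers this too.

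Your ``MLC-free alternative'' via Thurston's minor-leaf theory is not what the paper does and, as you state it, is circular: realizing a clean quadratic lamination by an actual polynomial with locally connected Julia set is precisely the content of the proposition, and Thurston's combinatorial classification alone does not supply the analytic local connectivity without an input of Yoccoz type.
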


\begin{proof}
 The statement will follow if we prove that, in the parameter plane
 of complex polynomials, the external ray of angle $\vartheta$ lands at
 a unique point of the Mandelbrot set, and  the quadratic polynomial
 corresponding to this point has locally connected Julia set.
 By the theorem of Yoccoz on local connectivity (see e.g. \cite{Milnor-LC}),
 actually, by a simple version of it, this is true if the critical leaf or
 gap $g$ of $\Lc^c_\vartheta$ is non-recurrent, i.e. no iterated $\si_2$-image
 of $g'$ intersects a small neighborhood of $g'$.

 If $\Lc_\vartheta^c$ has a critical gap rather than a critical leaf,
 then this critical gap must be preperiodic, hence it is not recurrent.
 We now assume that $\Lc_\vartheta^c$ has a critical leaf $\ell_0$, hence
 $\Lc_\vartheta^c=\Lc_\vartheta$.
 Let $\Lc$ be the quadratic invariant lamination that models the polynomial $p_c$.
 We will write $G^1$ and $G^2$ for the strictly preperiodic gaps of $\Lc$ that
 contain the endpoints of $\ell_0^*$.
 The bases of the gaps $G^1$ and $G^2$ map to the basis of a strictly preperiodic
 gap $G$ of $\Lc$.
 If $\ell_0^*$ were recurrent, then the gap $G$ contained a recurrent
 point or a recurrent edge.
 Since the forward orbit of $G$ contains only finitely many gaps and since
 two intersecting gaps have always an edge in common,
 there are no recurrent points in $G$ that are not in edges of $G$.
 Every edge of $G$ is eventually periodic.
 However, it follows from Lemma \ref{l:str-per} that all edges are strictly
 preperiodic, hence non-recurrent.
\end{proof}

It follows that the conclusion of Theorem \ref{t:same-reg} can be made stronger:
{\em the rational function $f$ is topologically conjugate to the mating
of $p_c$ with some quadratic polynomial!}
The next question is: given a capture hyperbolic component $\Hc$ in $Per_k(0)$,
how many of the boundary points of $\Hc$ correspond to matings?
We will see that, in some cases, all boundary maps correspond to
matings, and in some cases, there is a simple arc on the boundary of
$\Hc$ consisting of matings.

\subsection{End-captures and cut-captures}
We now discuss how much a capture depends on the choice of a capture path.
Let $p_c$ be a quadratic polynomial such that $0$ is periodic of minimal
period $k$, and $\beta:[0,1]\to\CP^1$ a capture path for $p_c$.
Note that the combinatorial class of the Thurston map $\sigma_\beta\circ p_c$
depends only on the homotopy class of the path $\beta$ relative to
the forward orbit of the point $v=\beta(1)$ (this forward orbit is a finite
set by definition of a capture path).
Let $V$ be the Fatou component of $p_c$ containing the point $v$.
Define {\em limbs} of $V$ as the closures of the complement to $\ol V$
in the filled Julia set $K_c$.
As was noted in \cite{ReesV3}, the iterated forward images of $v$ under $p_c$
are contained in only one or two limbs of $V$.
In the first case, we say that $\beta$ is an {\em end-capture path}.
In the second case, we say that $\beta$ is a {\em cut-capture path}.
A rational function (if any) combinatorially equivalent to $\sigma_\beta\circ p_c$
is called an {\em end-capture} or a {\em cut-capture} according to
whether $\beta$ is an end-capture path or a cut-capture path.
Let us first consider a hyperbolic component in $Per_k(0)$ corresponding to
an end-capture.

\begin{thm}
  \label{t:end-cap}
  Let $\Hc$ be a hyperbolic component in $Per_k(0)$, whose center
  is represented by an end-capture $H$ of $p_c$.
  Then representatives of all points in $\d\Hc-\ol\Bc$ are matings of
  $p_c$ with certain quadratic polynomials.
\end{thm}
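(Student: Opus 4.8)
The plan is to reduce the theorem to a combinatorial statement about capture paths and then invoke the results already proved. By Theorem~\ref{t:same-reg} together with Proposition~\ref{p:lam-poly}, it suffices to show: for every internal angle $\varkappa\in\R/\Z$ there is a capture path $\beta_\varkappa$ for $p_c$ of internal angle $\varkappa$ whose formal capture $\sigma_{\beta_\varkappa}\circ p_c$ is combinatorially equivalent to the \emph{same} rational map $H$. Indeed, each point $\lambda_0\in\d\Hc-\ol\Bc$ carries a well-defined angle $\varkappa$; if $\varkappa$ is realized by such a path, of external angle $\vartheta$, then Theorem~\ref{t:same-reg} identifies the representative of $\lambda_0$ with the mating $p_c\sqcup\Lc_\vartheta$, and Proposition~\ref{p:lam-poly} promotes $\Lc_\vartheta^c$ to the lamination of a genuine quadratic polynomial. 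Thus the entire task is to realize all internal angles by capture paths that keep the capture equal to $H$.

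For the construction, fix a capture path $\beta$ representing the center, put $v=\beta(1)$, and let $V$ be the Fatou component of $p_c$ containing $v$, so that $\d V$ is a Jordan curve parametrized by internal angle. The hypothesis that $H$ is an \emph{end}-capture means precisely that the whole forward orbit $p_c(v),p_c^{\circ 2}(v),\dots$ lies in a single limb $L$ of $V$; let $q\in\d V$ be the unique point at which $L$ is attached to $\ol V$, and $\varkappa_q$ its internal angle. For each $\varkappa\ne\varkappa_q$ I would take the point $b_\varkappa\in\d V$ of internal angle $\varkappa$, choose any external ray landing at $b_\varkappa$ (its angle is automatically non-periodic by Lemma~\ref{l:str-per}, as $V$ is strictly preperiodic), and let $\beta_\varkappa$ run along this ray to $b_\varkappa$ and then along a simple arc inside $V$ to $v$. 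Since its external piece lies in $\Omega_c$, its internal piece in $V$, and $b_\varkappa\in\d V-\{q\}$, the path $\beta_\varkappa$ avoids the limb $L$ entirely.

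The key point is that all these paths define the same capture. The limb $L$ is a non-separating continuum, so $\CP^1-L$ is connected and simply connected. Each $\beta_\varkappa$ is an arc in $\CP^1-L$ from $\infty$ to $v$, hence they are all homotopic rel endpoints inside $\CP^1-L$; and since $\CP^1-L$ contains no point of $O(v)-\{v\}\subset L$, they are homotopic in $\CP^1$ relative to $O(v)$, and so define combinatorially equivalent formal captures. The original path $\beta$ is itself one of these (or, if it lands at $q$, a one-sided limit of them), so this common capture is exactly $H$. This realizes every angle $\varkappa\ne\varkappa_q$. The single remaining angle $\varkappa_q$ is obtained by letting $b$ land at the cut point $q$ along one of the finitely many external rays there, as a limit of paths of nearby angles; this again yields $H$. (If the boundary point of $\d\Hc$ of angle $\varkappa_q$ happens to lie in $\ol\Bc$, it is excluded from the statement and can be ignored.)

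The step I expect to demand the most care is the homotopy-invariance of the third paragraph: checking that $L$ is genuinely a non-separating continuum, that the landing points $b_\varkappa$ exhaust $\d V-\{q\}$, and that the common class is $[H]$. This is exactly the feature distinguishing end-captures from cut-captures. A single obstacle-limb has simply connected complement, so every internal angle except the attachment point is reachable without winding around the post-critical set. For a cut-capture, by contrast, the forward orbit occupies two limbs $L_1,L_2$, the complement $\CP^1-(L_1\cup L_2)$ has fundamental group $\Z$, and the capture stays constant only on each of the two arcs into which $q_1,q_2$ cut $\d V$ — yielding merely an arc of matings instead of the whole boundary.
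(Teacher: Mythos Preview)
Your proof is correct and follows essentially the same route as the paper: reduce to Theorem~\ref{t:same-reg} and Proposition~\ref{p:lam-poly}, then observe that for an end-capture every internal angle $\varkappa$ is realized by a capture path yielding the same $H$. The paper states this last fact in one sentence (``Since $H$ is an end-capture, it is also the capture of $p_c$ corresponding to the capture path $\beta$''), implicitly relying on the discussion preceding the theorem; you spell out the underlying reason, namely that the single limb $L$ containing $O(v)-\{v\}$ is a non-separating continuum, so $\CP^1-L$ is simply connected and all capture paths landing on $\d V-\{q\}$ are homotopic rel $O(v)$. Your treatment of the attachment angle $\varkappa_q$ via a limit is slightly informal; a cleaner way is to note that a capture path through $q$ meets $L$ only at $q\notin O(v)$, hence can be perturbed off $L$ without changing its class rel $O(v)$.
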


\begin{proof}
  Let $\lambda_0$ be a point in $\d\Hc-\ol\Bc$ of angle $\varkappa$, and $f$
  a rational function representing $\lambda_0$.
  Suppose that $H$ is the capture of $p_c$ corresponding to some capture
  path $\beta_0$.
  Consider any capture path $\beta$ for $p_c$ such that $\beta(1)=\beta_0(1)$,
  and the point $b=\beta(1/2)$ has internal angle $\varkappa$ with respect to the
  Fatou component of $p_c$ containing the point $\beta_0(1)$.
  Since $H$ is an end-capture, it is also the capture of $p_c$ corresponding
  to the capture path $\beta$.
  Theorem \ref{t:same-reg} and Proposition \ref{p:lam-poly} now imply that $f$
  is topologically conjugate to the mating of $p_c$ with some quadratic polynomial.
\end{proof}

We now consider cut-capture paths for $p_c$.
Let $V$ be a strictly preperiodic Fatou component of $p_c$.
Suppose that the forward orbit of $V$ is contained in two limbs of $V$.
In this case, there are two homotopy classes of capture paths terminating in $V$.
One class $\Cc_0$ contains capture paths, whose internal angles are in $(0,1/2)$,
and the other class $\Cc_1$ contains capture paths, whose internal
angles are in $(1/2,1)$.
Capture paths with internal angles $0$ and $1/2$ can belong to either class.
There are, in general, two conformal captures $H_0$ and $H_1$ of $p_c$, up to conformal conjugacy.
For every capture path in $\Cc_0$, the corresponding capture is $H_0$,
and for every capture path in $\Cc_1$, the corresponding capture is $H_1$.
Let $\Hc_0$ and $\Hc_1$ denote the hyperbolic components of $Per_k(0)$,
whose centers are represented by $H_0$ and $H_1$, respectively.
Since $H_0$ corresponds to capture paths with internal angles from 0 to $1/2$,
we call $\Hc_0$ a $[0,1/2]$-capture component associated with $p_c$.
Similarly, we call $\Hc_1$ a $[1/2,1]$-capture component.

\begin{thm}
  \label{t:cut-cap}
  Let $\Hc_0$ be a $[0,1/2]$-capture hyperbolic component in $Per_k(0)$
  associated with $p_c$.
  Then any point of $\d\Hc_0-\ol\Bc$, whose angle belongs to $[0,1/2]$,
  is represented by a mating of $p_c$ with some quadratic polynomial.
  Similarly, let $\Hc_1$ be a $[1/2,1]$-capture hyperbolic component in
  $Per_k(0)$ associated with $p_c$.
  Then any point of $\d\Hc_1-\ol\Bc$, whose angle belongs to $[1/2,1]$,
  is represented by a mating of $p_c$ with some quadratic polynomial.
\end{thm}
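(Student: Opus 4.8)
The plan is to run the proof of Theorem \ref{t:end-cap} again, essentially verbatim, with one crucial modification. In the end-capture case the combinatorial class of $\sigma_\beta\circ p_c$ does not see which of the two homotopy classes the path $\beta$ belongs to, so any internal angle could be used. For a cut-capture this is false: the two classes $\Cc_0$ and $\Cc_1$ give genuinely different captures $H_0$ and $H_1$. Hence the entire point of the argument is to arrange that the capture path realizing the prescribed internal angle can be chosen inside $\Cc_0$ (resp. $\Cc_1$), and the hypothesis that the angle lies in $[0,1/2]$ (resp. $[1/2,1]$) is exactly what makes this possible.

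So let $\lambda_0$ be a point of $\d\Hc_0-\ol\Bc$ whose angle $\varkappa$ lies in $[0,1/2]$, and let $f$ represent $\lambda_0$; the angle $\varkappa$ is well defined by Proposition \ref{p:angle}. First I would produce a capture path $\beta$ for $p_c$, landing at a boundary point $b$ of $V$ of internal angle $\varkappa$ and lying in the class $\Cc_0$. For $\varkappa\in(0,1/2)$ this is immediate from the dichotomy recalled before the theorem: $\Cc_0$ contains capture paths of every internal angle in $(0,1/2)$, so I take $\beta$ to be one landing at a point $b$ of internal angle exactly $\varkappa$ (geometrically, the two limbs of $V$ carrying the forward orbit of $v$ are attached at the internal angles $0$ and $1/2$, which split $\d V$ into the two arcs underlying $\Cc_0$ and $\Cc_1$). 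For the endpoint values $\varkappa=0$ and $\varkappa=1/2$, a capture path of that internal angle may be placed in either class, and I choose it in $\Cc_0$.

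Since $\beta\in\Cc_0$, the Thurston map $\sigma_\beta\circ p_c$ is combinatorially equivalent to $H_0$, the capture representing the center of $\Hc_0$. Writing $\vartheta$ for the external angle of $\beta$, Theorem \ref{t:same-reg} now applies with $\Hc=\Hc_0$ and $H=H_0$: the boundary point $\lambda_0$ has angle $\varkappa$, which is precisely the internal angle of $\beta$, so $f$ is topologically conjugate to the mating $p_c\sqcup\Lc_\vartheta$. By Proposition \ref{p:lam-poly} the cleaning $\Lc_\vartheta^c$ models a quadratic polynomial, so this is the mating of $p_c$ with an honest quadratic polynomial, as claimed. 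The statement for a $[1/2,1]$-capture component $\Hc_1$ is proved identically, using the class $\Cc_1$ and the arc of internal angles in $[1/2,1]$.

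The only real difficulty is the bookkeeping that confines the angle to $[0,1/2]$, and I expect this to be the heart of the matter rather than any analytic estimate. If $\varkappa\in(1/2,1)$, then any capture path of that internal angle lands on the $(1/2,1)$-arc of $\d V$ and therefore lies in $\Cc_1$, so its capture is $H_1$ and not $H_0$; Theorem \ref{t:same-reg} would then describe a boundary point of $\Hc_1$, not of $\Hc_0$, and the argument collapses. This is exactly why a cut-capture component produces only a sub-arc of matings on its boundary, in contrast to the end-capture case, where the combinatorial class is insensitive to the internal angle and all of $\d\Hc-\ol\Bc$ consists of matings.
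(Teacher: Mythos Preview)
Your proof is correct and is precisely the argument the paper intends: the paper does not write out a separate proof for Theorem~\ref{t:cut-cap} but instead sets up the dichotomy $\Cc_0$/$\Cc_1$ (with the endpoint angles $0$ and $1/2$ assignable to either class) immediately before the statement, so that the proof of Theorem~\ref{t:end-cap} carries over verbatim once one observes that a capture path of the prescribed internal angle $\varkappa\in[0,1/2]$ can be chosen in $\Cc_0$. Your identification of this choice as the only new ingredient, and your explanation of why the argument fails for $\varkappa\in(1/2,1)$, match the paper's setup exactly.
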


This theorem leads to the following question: {\em is it true that
every $[0,1/2]$-capture component is simultaneously a $[1/2,1]$-capture component?}
If this is true, then, for every hyperbolic component $\Hc$ in $Per_k(0)$,
all points of $\d\Hc-\ol\Bc$ are represented by matings.
Conjecturally, {\em all points of $\d\Hc$ are represented by matings}, including
those in $\ol\Bc$.

\end{document}